\documentclass{amsart}
\usepackage[T1]{fontenc}
\usepackage[utf8]{inputenc}
\usepackage{amsmath,amsthm,amsfonts,amscd,amssymb,eucal,latexsym,mathrsfs}
\usepackage{stmaryrd}
\usepackage{enumerate}
\usepackage{hyperref}
\usepackage[all]{xy}
\usepackage{etoolbox}

\usepackage{tikz,tikz-cd}
\usetikzlibrary{shapes.geometric}
\usetikzlibrary{arrows}

\usetikzlibrary{positioning}
\usepackage{float}
\usepackage{MnSymbol}
\usetikzlibrary{matrix}

\newcommand{\toc}{\tableofcontents}

\theoremstyle{plain}
\newtheorem{theorem}{Theorem}[section]
\newtheorem*{theorem*}{Theorem}

\newtheorem{corollary}[theorem]{Corollary}
\newtheorem*{corollary*}{Corollary}

\newtheorem{lemma}[theorem]{Lemma}

\newtheorem{proposition}[theorem]{Proposition}

\theoremstyle{definition}
\newtheorem{remark}[theorem]{Remark}

\newtheorem{example}[theorem]{Example}

\newtheorem{definition}[theorem]{Definition}
\newtheorem*{definition*}{Definition}

\usetikzlibrary{calc,graphs}
\usepackage{xcolor}
\usetikzlibrary{arrows,decorations.pathmorphing}


\newcommand{\p}{\varphi}

\newcommand{\e}{\varepsilon}

\newcommand{\IR}{\mathbb{R}}

\DeclareMathOperator{\del}{\partial}

\DeclareMathOperator{\inj}{\hookrightarrow}

\newcommand{\SU}{\mathrm{SU}}

\newcommand{\ts}{\textsection}


\newcommand{\rk}{\mathrm{rk}}
\newcommand{\val}{\mathrm{val}}
\newcommand{\Id}{\mathrm{Id}}
\newcommand{\Aut}{\mathrm{Aut}}
\newcommand{\up}{\mathrm{\uparrow}}

\title{Root distributions in Moebius--Kantor complexes}

\author{Sylvain Barr\'e}
\author{Mika\"el Pichot}
\address{Sylvain Barr\'e, UMR 6205, LMBA, Université de Bretagne-Sud,BP 573, 56017, Vannes, France}\email{Sylvain.Barre@univ-ubs.fr}
\address{Mika\"el Pichot, McGill University, 805 Sherbrooke St W., Montr\'eal, QC H3A 0B9, Canada}\email{pichot@math.mcgill.ca}

\begin{document}

\begin{abstract}
We study the distribution of roots of rank 2 in nonpositively curved 2-complexes with Moebius–Kantor links. For every face in such a complex, the parity of the number of roots of rank 2 in a neighbourhood of the face is a well-defined geometric invariant determined by the root distribution. We study the relation between the root distribution and the parity distribution.
We prove that there exist parity distributions in flats which are disallowed in Moebius--Kantor complexes. This contrasts with the fact that every root distribution can be realized. We classify the root distributions associated with an even parity distribution (i.e., such that every face is even) on a flat plane. We prove that there exists up to isomorphism a unique even simply connected Moebius--Kantor complex---namely, the Pauli complex.
\end{abstract}

\maketitle

\section{Introduction}

A Moebius--Kantor complex is a 2-complex with triangle faces whose links are  isomorphic to the Moebius--Kantor graph (i.e., the unique  cubic symmetric graph with 16 vertices). Every Moebius--Kantor complex can be viewed as  a nonpositively curved  2-complex, in which every face is isometric to an equilateral triangle with sides of length 1 \cite[II.5.1]{BH}.  

We shall begin by reviewing a few  definitions. 

\subsection{Roots}
Let $\Delta$ be a nonpositively curved 2-complex, $x\in \Delta$ be a vertex, and $L_x$ the link at $x$, endowed with the angular metric. We call \emph{root} at $x$ a isometric embedding $\alpha\colon [0,\pi]\inj L_x$ such that $\alpha(0)$ is a link vertex (of order $>2$). Every root has a \emph{rank} $\rk(\alpha)$, which is a rational number in $[1,2]$ defined by
\[
\rk(\alpha):=1+{N(\alpha)\over q_\alpha}
\] 
where 
\[
N(\alpha):=|\{\beta\in \Phi_x\mid \alpha\neq \beta, \alpha(0)=\beta(0), \alpha(\pi)=\beta(\pi)\}|,
\]
$\Phi_x$ denotes the set of roots at $x$ and, for a root $\alpha$,  $q_\alpha$ denotes the order of $\alpha(0)$ minus 1:
\[
q_\alpha:=\val(\alpha(0))-1.
\]  
We refer to \cite[\ts 4]{chambers} for more details on these definitions.

\subsection{Parity in Moebius--Kantor complexes}
Let $\Delta$ be a Moebius--Kantor complex. Let $f$ be a face in $\Delta$. The parity of $f$ is defined as follows (cf.\  \cite[\ts 2]{parity}). Let $\tilde f$ denote the equilateral triangle formed by the union of $f$ and three faces in $\Delta$ corresponding to a choice of a face  $\neq f$ adjacent to every side of $f$. We call $\tilde f$ a \emph{large triangle} containing $f$.
 For every vertex $x$ of $f$, the triangle $\tilde f$ determine a root $\alpha_x$ at $x$ in $\Delta$. 

\begin{definition} We call \emph{parity} of $f$ the parity of the number of roots $\alpha_x$ which are of rank 2, when $x$ runs over the three vertices of the face $f$.
\end{definition}

This is a well-defined invariant, i.e., the parity of $f$ does not depend on the choice of the large triangle $\tilde f$ containing $f$, by \cite[Lemma 2.1]{parity}.

\subsection{Root distributions} \label{S - Intro root distribution}
Let $\Pi$ be a flat plane in a Moebius--Kantor complex $\Delta$, i.e., the image of an isometric embedding $ \IR^2\to \Delta$, where $\IR^2$ is endowed with the Euclidean metric.  Every flat is tessellated by equilateral triangles with sides of length 1. 

Let $x\in \Pi$ be a vertex. For every simplicial line $\ell\subset \Pi$ through $x$, the rank of every root $\alpha\in \Phi_x$  whose image is included in $\Pi$ and whose extremities belong to $\ell$ is constant.  We call $\ell$ a simplicial direction at $x$, and the common value $\rk(\alpha)$ the rank of $\ell$. Observe that precisely two of the three simplicial directions at $x$ in $\Pi$ are of rank 2.  

\begin{definition}
The \emph{root distribution of $\Pi$} is the map $\delta$ which associates to every vertex $x\in\Pi$ the unique simplicial direction at $x$ which is not of rank 2.  
\end{definition}

Graphically, one may represent $\delta$ as a field of segments of length $2\e$ which takes a vertex $x$ to the segment $(x-\e,x+\e)$ containing $x$ in $\delta(x)$, for some small fixed $\e>0$. 

We call \emph{abstract root distribution} on $\IR^2$  the choice, for every vertex $x$ of the tessellation by equilateral triangle, of a simplicial direction $\delta(x)$ at $x$.
 \begin{definition} 
An abstract root distribution is \emph{realized} in a Moebius--Kantor complex $\Delta$ if it is the root distribution of some flat $\Pi$ in $\Delta$. 
\end{definition}

We shall first prove that every abstract root distribution can be realized in some Moebius--Kantor complex:

\begin{proposition}\label{L - abstract root distribution in flat}
Every abstract root distribution can be realized in a Moebius--Kantor complex.  
\end{proposition}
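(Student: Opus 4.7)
The plan is to construct a Moebius--Kantor complex directly, starting from the Euclidean plane $\Pi=\IR^2$ (with its triangular tessellation) carrying the prescribed distribution $\delta$, and freely adjoining additional structure to complete all links into copies of the Moebius--Kantor graph. First I would verify local realizability. At each vertex $x\in\Pi$, the six triangles of $\Pi$ meeting at $x$ give a $6$-cycle $C_x$ in the eventual link $L_x$. Working in the Moebius--Kantor graph realized as the generalized Petersen graph $GP(8,3)$, I would exhibit an explicit $6$-cycle $C$ and directly enumerate the length-$3$ paths between its antipodal vertices: exactly one antipodal pair is joined by only the two paths inside $C$ (yielding a rank-$3/2$ direction), while each of the other two pairs is joined by one further length-$3$ path (yielding a rank-$2$ direction). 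Using the action of the automorphism group on pointed $6$-cycles, the ``non-rank-$2$'' antipodal pair can be placed in any of the three positions; hence, for each $x\in\Pi$, I can fix an embedding $\iota_x\colon C_x\hookrightarrow \MG$ making $\delta(x)$ the unique rank-$3/2$ direction.

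Next I would globally assemble the local data. For each edge $e=xy$ of $\Pi$ the MK-link condition at $x$ (and at $y$) demands exactly one third face $F_e$ containing $e$; since $F_e$ is shared between $x$ and $y$, I would introduce a fresh vertex $z_e$ for each edge of $\Pi$ and take $F_e=\{x,y,z_e\}$, adding the two new edges $xz_e$ and $yz_e$. At each $x\in\Pi$, I would further introduce four fresh vertices $w^x_1,\dots,w^x_4$ with edges $xw^x_i$, and then add the twelve remaining $x$-faces dictated by $\iota_x$ (triangles $\{x,p,q\}$ with $p,q$ among the new vertices $z_e$ and $w^x_i$, their pairing prescribed by the corresponding link-edges of $\MG$). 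After this step the link at every $x\in\Pi$ is a copy of $\MG$ with $C_x$ embedded via $\iota_x$, so the root distribution of $\Pi$ is exactly $\delta$.

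Finally, the newly introduced vertices $z_e$ and $w^x_i$ carry only partial links inherited from the construction above; I would inductively complete the complex by iterating the same kind of free extension at every new vertex, adjoining further fresh vertices, edges, and faces until each link stabilizes to $\MG$. The main obstacle will be to ensure that at each stage the partial link actually embeds into $\MG$ and admits a free completion without forcing identifications that would create cycles of angular length $<2\pi$ in some link. I would handle it by keeping the extensions maximally free, so the partial link encountered at any newly introduced vertex is a union of a few short paths, a configuration that embeds into $\MG$ by direct inspection; the infinite, locally finite $2$-complex produced in the limit is then a Moebius--Kantor complex containing $\Pi$ as a flat realizing $\delta$.
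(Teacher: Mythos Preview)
Your approach is correct and rests on the same core idea as the paper's proof: free inductive construction, adjoining fresh cells so as to complete every partial link to a copy of the Moebius--Kantor graph, with the key local input that a $6$-cycle with a prescribed antipodal pair can be embedded in $\MG$ so that this pair becomes the unique rank-$3/2$ direction.

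The organization, however, differs. The paper fixes a vertex $x\in\Pi$ and builds concentric balls $B_n$ in a Moebius--Kantor complex containing the hexagons $H_n\subset\Pi$; at each step the partial link at a boundary vertex is either a small tree (diameter $\leq 5$) or such a tree together with the $1$-neighbourhood of the $6$-cycle coming from $\Pi$, and both patterns visibly embed in $\MG$. Your plan instead completes \emph{all} links along $\Pi$ first and then iterates outward layer by layer. This works, but the bookkeeping is heavier: once the $\Pi$-links are complete, the partial link at a vertex $z_e$ is already an ``H''-shaped tree (two adjacent degree-$3$ vertices with four leaves), not merely a union of disjoint short paths, and as you complete one new vertex you enlarge the partial links at its neighbours before they are processed. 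You would need to check that these growing partial links remain forests of bounded diameter (which they do if every extension is maximally free), whereas the paper's ball-by-ball scheme keeps this invariant transparent. In short: same engine, but the paper's concentric-ball induction gives cleaner control over the shape of the boundary links, while your plane-first scheme trades this for a more direct realization of $\Pi$ at the outset.
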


We refer to \ts \ref{S - root prescription in flats} for the proof. This result does not hold for abstract parity distributions (see Cor.\   \ref{T - abstract parity not realizable}).

\subsection{Parity prescriptions in a flat} Every abstract root distribution $\delta$ on $\IR^2$ induces a parity distribution $p$ on the triangle faces of the tessellation. If $f$ is a face, then the parity $p(f)\in \{0,1\}=\{\text{even, odd}\}$ is the parity of the image of $f$ under an isometric embedding $ \IR^2\to \Delta$ which realizes $\delta$ in some Moebius--Kantor complex $\Delta$ (which exists by Prop.\ \ref{L - abstract root distribution in flat}). Since the value of $p(f)$ does not depend on the choice of the large triangle $\tilde f$, one may  always assume that $\tilde f$ is included in the image of $ \IR^2\to \Delta$. Thus, $p(f)$ depends only on the abstract root distribution $\delta$. It computes the parity of the number of sides of $\tilde f$ which are not in the simplicial directions selected by $\delta$.    

\begin{definition}
We call $p$ the \emph{parity distribution of $\delta$}. 
\end{definition}

An \emph{abstract parity distribution} on $\IR^2$  is the choice, for every face $f$ of the tessellation by equilateral triangle, of  value a $p(f)\in \{0,1\}$.

We aim to show (in   contrast with Prop.\ \ref{L - abstract root distribution in flat}) that there exist abstract parity distributions which are not the parity distribution of any flat in any Moebius--Kantor complex.   This will be an immediate corollary of the following result.

\begin{theorem}\label{L - abstract parity distribution in flat}
There exists an abstract parity distribution which is not the parity distribution of an abstract root distribution.
\end{theorem}

We refer to \ts\ref{S - parity root} for the proof.

An abstract parity distribution is said to be \emph{realized} in a Moebius--Kantor complex $\Delta$ if it is the induced parity distribution of some flat in $\Delta$.

\begin{corollary}\label{T - abstract parity not realizable}
There exists an abstract parity distribution on $\IR^2$ which is not realized in a Moebius--Kantor complex.
\end{corollary}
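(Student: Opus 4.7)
The plan is to derive the corollary as an essentially formal consequence of Theorem \ref{L - abstract parity distribution in flat}, using the observation, already recorded in \ts\ref{S - Intro root distribution} just before the definition of the parity distribution of $\delta$, that the parity of a face in a flat plane is computed from the underlying root distribution. Concretely, I would fix an abstract parity distribution $p$ given by Theorem \ref{L - abstract parity distribution in flat} with the property that $p$ is not the parity distribution of any abstract root distribution on $\IR^2$, and argue by contradiction: assume $p$ is realized in some Moebius--Kantor complex $\Delta$, via an isometric embedding $\iota\colon \IR^2\to\Delta$ whose image is a flat plane $\Pi$ inducing $p$.

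From such a $\Pi$ I would extract an abstract root distribution $\delta$ on $\IR^2$ by defining, at every vertex $x\in\IR^2$, $\delta(x)$ to be the pullback via $\iota$ of the unique simplicial direction at $\iota(x)$ in $\Pi$ which is not of rank 2 (this is well-defined by the remark in \ts\ref{S - Intro root distribution} that exactly two of the three simplicial directions at a vertex of a flat are of rank 2). It remains to check that the parity distribution induced by $\delta$ coincides with $p$. Given a face $f$ of the tessellation, the face $\iota(f)$ lies in $\Pi$, and for each of its three sides the adjacent face in $\Pi$ provides a legitimate choice of large triangle $\tilde{\iota(f)}$ contained in $\iota(\IR^2)$. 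Since parity is independent of the choice of large triangle, $p(f)$ is the parity of the number of such adjacent sides which are not in simplicial directions of rank $2$, i.e., which are not in the direction prescribed by $\delta$ at the corresponding vertex. This is precisely the parity distribution of $\delta$, contradicting the choice of $p$.

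The only step requiring genuine attention is checking that the large triangle used to compute the parity of $\iota(f)$ can be taken inside the flat, which is the main structural input; beyond this, the argument is a direct unwinding of the definitions set up in \ts\ref{S - Intro root distribution}, so I do not anticipate any essential obstacle.
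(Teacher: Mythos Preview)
Your proposal is correct and follows essentially the same approach as the paper's proof, which is a one-sentence argument: the abstract parity distribution from Theorem~\ref{L - abstract parity distribution in flat} cannot be realized in any Moebius--Kantor complex, since it would then be the parity distribution of the root distribution induced on the flat by that complex. Your version simply unwinds the definitions from \ts\ref{S - Intro root distribution} in more detail (in particular the point that the large triangle may be chosen inside the flat), but the underlying idea is identical.
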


\begin{proof} The parity distribution given by Theorem \ref{L - abstract parity distribution in flat} is not  realized in a Moebius-Kantor complex $\Delta$, for it would be the parity distribution of the root distribution induced by $\Delta$.
\end{proof}

On the other hand, we can prove the following result. 

\begin{proposition}\label{P - balls of radius 1}
Every parity distribution on a ball of radius 1 (an hexagon) in $\IR^2$ can be realized by an abstract root distribution, and therefore can be realized in a Moebius--Kantor complex. 
\end{proposition}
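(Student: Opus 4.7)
The plan is to give an explicit construction of an abstract root distribution $\delta$ on $\IR^2$ whose induced parity distribution agrees with any prescribed $p$ on the closed ball of radius $1$; Proposition \ref{L - abstract root distribution in flat} then produces a realizing Moebius--Kantor complex. Label the ball with centre $x_0$, outer vertices $v_1,\dots,v_6$ (indices mod $6$), and faces $F_i = (x_0, v_i, v_{i+1})$ for $i=1,\dots,6$. Every vertex of every $F_i$ lies in the ball, so $p(F_i)$ depends only on the three values $\delta(x_0), \delta(v_i), \delta(v_{i+1})$, and after those seven relevant values have been chosen we may extend $\delta$ arbitrarily to the remaining vertices of $\IR^2$.

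First I would unpack the parity formula. At each vertex $x$ of $F_i$, the large triangle $\tilde F_i$ singles out one simplicial direction $\ell_x^{F_i}$ at $x$---namely the direction along the side of $\tilde F_i$ through $x$---and the root $\alpha_x$ appearing in the definition of $p(F_i)$ has rank $2$ if and only if $\delta(x) \neq \ell_x^{F_i}$. Therefore
\[
p(F_i) \;\equiv\; [\delta(x_0) \neq \ell_{x_0}^{F_i}] + [\delta(v_i) \neq \ell_{v_i}^{F_i}] + [\delta(v_{i+1}) \neq \ell_{v_{i+1}}^{F_i}] \pmod{2}.
\]
A short geometric inspection identifies the three directions: $\ell_{x_0}^{F_i}$ is the simplicial direction at $x_0$ parallel to the edge $v_iv_{i+1}$, $\ell_{v_i}^{F_i}$ is the simplicial direction at $v_i$ passing through $v_{i-1}$, and $\ell_{v_{i+1}}^{F_i}$ is the simplicial direction at $v_{i+1}$ passing through $v_{i+2}$.

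Next I would fix $\delta(x_0)$ arbitrarily and restrict each $\delta(v_j)$ to two options: the simplicial direction through $x_0$, or the one through the \emph{next} outer vertex $v_{j+1}$ (never the one through $v_{j-1}$). Under this restriction the middle indicator in the formula above, which measures disagreement at $v_i$ with the direction through $v_{i-1}$, is identically equal to $1$, since neither option agrees with it. The first indicator is a constant depending only on $\delta(x_0)$ and $i$. Hence $p(F_i)$ is determined up to a known constant by the third indicator, which is a two-way function of the single choice at $v_{i+1}$: it equals $0$ when $\delta(v_{i+1})$ is the direction through $v_{i+2}$, and $1$ when $\delta(v_{i+1})$ is the direction through $x_0$. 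Since each $\delta(v_{i+1})$ governs only the parity of $F_i$, the six constraints decouple and any prescription is realized.

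The main technical point is the geometric identification of the three directions $\ell_x^{F_i}$ from $\tilde F_i$---a direct but somewhat fiddly check in the equilateral tessellation. Once this is in place, the surjectivity of the parity map on the hexagonal ball is immediate, in contrast with the global obstruction of Theorem \ref{L - abstract parity distribution in flat}.
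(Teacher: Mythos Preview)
Your proof is correct, and it takes a genuinely different route from the paper's. The paper proceeds by brute-force case analysis: up to symmetry there are thirteen parity distributions on the hexagon (five with at most two odd faces, five with at most two even, three with exactly three of each), and for each one the authors simply exhibit an explicit root distribution in a figure. Your argument instead gives a single uniform construction: by restricting each $\delta(v_j)$ to the two directions not pointing toward $v_{j-1}$, you force the indicator $[\delta(v_i)\neq\ell_{v_i}^{F_i}]$ to be identically~$1$, after which the six parity equations decouple and each $p(F_i)$ is controlled by the binary choice at $v_{i+1}$ alone. This is more conceptual, handles all $2^6$ prescriptions at once without symmetry reductions, and makes transparent \emph{why} the hexagon imposes no obstruction---each face's parity is governed by an independent degree of freedom. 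The paper's approach, by contrast, has the virtue of being entirely concrete and picture-based, requiring no bookkeeping about which direction $\ell_{v_i}^{F_i}$ points. Your geometric identification of $\ell_{x_0}^{F_i}$, $\ell_{v_i}^{F_i}$, $\ell_{v_{i+1}}^{F_i}$ is correct (the side of $\tilde F_i$ through $x_0$ is indeed the diameter $v_{i-1}v_{i+2}$, parallel to $v_iv_{i+1}$), and the decoupling step is sound since the restricted choice at $v_{i+1}$ affects only the third indicator in $p(F_i)$ and the second indicator in $p(F_{i+1})$---but the latter is already frozen at~$1$.
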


This is established in \ts\ref{S - parity one ball}.

\subsection{Parity prescriptions in a Moebius-Kantor complex} By Theorem \ref{L - abstract parity distribution in flat}, there exists an abstract parity distribution on $\IR^2$ which cannot realized in a Moebius--Kantor complex. This should be compared with the results in \ts 6 of \cite{parity}, according to which the  parity can be  freely prescribed in Moebius--Kantor complexes, and therefore  that any abstract parity distribution can always ``be realized''  in some Moebius--Kantor complex.  

The point here is that in the case of a parity prescription in a Moebius--Kantor complex, the parity distribution can only be pre-assigned and then realized inductively in successive spheres, as opposed to being prescribed in advance as a 0-1 valued function (since there does not exist a predefined space that would serve as domain for this function). 

The free prescription theorem (from \cite[\ts 5]{parity}) for Moebius--Kantor complexes can be stated as follows.  Let $B_n$ denote a ball of radius $n$ centered at a vertex in a Moebius--Kantor complex, and  $S_n:=B_n\setminus  (B_{n-1})^\circ$ denote the closed simplicial sphere of radius $n$, where $(B_{n-1})^\circ$ denotes the interior of $B_{n-1}$. One can realize an arbitrary parity function on $S^n$ in a Moebius--Kantor complex by iterating the following construction. 

\begin{theorem}\label{T - free prescription} Given an arbitrary function $p\colon S_n\to\{0,1\}$ defined on the 2-skeleton of $S_n$,
there exists a ball $B_{n+1}$ in a Moebius--Kantor complex, containing $B_n$,  such that the parity of the faces of $S_n$ in $B_{n+1}$ is given by $p$.
\end{theorem}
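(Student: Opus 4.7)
The plan is to build $B_{n+1}$ from $B_n$ in two stages: first, attach the missing faces of each large triangle $\tilde f$ for $f \in S_n$, using the freedom in these choices to prescribe the parities; second, complete every vertex link created in the first stage to a full Moebius--Kantor graph. Only the first stage affects the parities of the faces of $S_n$, so the task is to show that these combinatorial choices can be made to realize an arbitrary $p$.

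The starting observation is that the Moebius--Kantor link is cubic, so every root $\alpha$ satisfies $q_\alpha = 2$ and $\rk(\alpha) \in \{1, 3/2, 2\}$. The root $\alpha_x$ determined by a large triangle $\tilde f$ at a vertex $x$ has rank $2$ exactly when its endpoints $\alpha_x(0)$ and $\alpha_x(\pi)$ are joined by three parallel length-$3$ paths in $L_x$. This is a purely combinatorial condition on $L_x$ that can be toggled locally by changing how the new link vertex, coming from the face of $\tilde f$ opposite $x$, attaches to the rest of $L_x$.

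Next, I would enumerate the faces $f$ of $S_n$ in a convenient order (for instance, so that each face shares at most one of its required $\tilde f$-extensions with previously treated faces). For each $f$ with vertices $x_1,x_2,x_3$, some of the three faces of $\tilde f$ adjacent to $f$ may already be present (either in $B_n$ or placed in earlier steps), while the rest are free. I would use the remaining freedom, together with the transitive action of the automorphism group of the Moebius--Kantor graph on its natural configurations, to flip one of the ranks $\rk(\alpha_{x_k})$ between $2$ and a value below $2$. Since three flips are available for the single parity constraint $p(f)$, any prescribed value can be achieved at the step where $f$ is treated.

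The main obstacle is coherence across faces that share a vertex. Two faces of $S_n$ meeting at a vertex $x$ impose separate constraints on the extended link $L_x$, and I must verify that all such constraints are simultaneously satisfiable while the resulting partial link still embeds into the Moebius--Kantor graph. The key technical input is a link-extension statement: any sufficiently sparse, consistent partial configuration in $L_x$ extends to a full Moebius--Kantor link. Once this is granted, an induction over the chosen face ordering yields a ball $B_{n+1}$ realizing $p$ on $S_n$, and a final completion step extends every new vertex link to the Moebius--Kantor graph, producing the required ball in a Moebius--Kantor complex.
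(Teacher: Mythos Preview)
The paper does not contain its own proof of this theorem. It is quoted from \cite[\S 5]{parity} (the authors' earlier paper ``On the isomorphism types of Moebius--Kantor complexes''), and the surrounding discussion in \S1.5 only offers a heuristic count of degrees of freedom, not an argument. So there is no proof here to compare your proposal against.

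That said, your outline is along the right lines but, as written, it is a plan rather than a proof. Two places need actual work. First, your ``link-extension statement''---that any sufficiently sparse partial configuration in $L_x$ extends to a Moebius--Kantor graph---is precisely the technical heart of the matter, and you have simply assumed it. In the analogous construction in this paper (Proposition~\ref{P - root distribution in a flat}) the partial links that arise are trees of diameter at most $5$, possibly together with the $1$-neighbourhood of a $6$-cycle, and the extension claim is checked for those specific shapes; for the free prescription theorem you would have to identify exactly which partial links arise on $S_n$ and verify the extension for those. Second, your ordering argument (``each face shares at most one of its required $\tilde f$-extensions with previously treated faces'') is not obviously achievable: a boundary vertex $x\in S_n$ may lie in many faces of $S_n$, and the three faces forming $\tilde f$ for one such $f$ can overlap heavily with those for a neighbouring $f'$. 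The coherence problem you flag is real, and your proposal does not yet explain why the single remaining degree of freedom at each step suffices once earlier choices have been made.
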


Free construction theorems of this sort originated in the theory of buildings, in particular in works of Ronan and Tits (see for instance \cite{ronan1986construction,ronan1987building,tits1974buildings,tits1981local} and references therein).  

It was mentioned in \cite{parity} that the proof of Theorem \ref{T - free prescription} seems to provide additional free parameters in the constructions, i.e., that the parity function $p$ should not determine the isomorphism type of $B_{n+1}$ in general.  That these additional degrees of freedom ultimately exist depends again on the balance between the root distribution at the  vertices in the boundary of $B_n$, and the parity distribution on the faces of $S_n$.  
A direct computation can be made which shows, very roughly, what the degree of freedom  for the parity is with respect to the degree of freedom for the roots. 

Namely, let $f_n$ denote the number of faces in $S_n$. There can be two types of vertices in the boundary of $B_n$, those of order 3 and those of order 4. Let $a_n$ and $b_n$ denote their respective number. Then the relative degree of freedom is computed as follows: there are $2^{f_n}$ choices for the parity in $S_n$, and at most $8^{a_n}16^{b_n}$ choices for the roots at the boundary of $B_n$;  the quantities $a_n,b_n,f_n$ are related by the equality:
\[
f_{n+1}=18a_n+15b_n;
\]
the quantities $a_n$ and $b_n$ can be computed explicitly by the recurrence relation:
\[
\begin{pmatrix} 
a_{n+1} \\
b_{n+1} 
\end{pmatrix}=
\begin{pmatrix} 
6 & 2 \\
3 & 4 
\end{pmatrix}
\begin{pmatrix} 
a_n \\
b_n 
\end{pmatrix}.
\]
Therefore,
\[
2^{f_n}<8^{a_n}16^{b_n}.
\]  
Since we do not need them except for a general estimate of the degrees of freedom, we leave to the reader the exercise of verifying these relations.

A Moebius--Kantor complex is said to be \emph{even} if every face is even. In \cite[Remark 5.3(1)]{parity}, it was suggested to study the case of even complexes, and that---in view of the above estimates---there ``ought to exist'' uncountably many non isomorphic constructions of such complexes. It turns out that there exists  a unique construction up to isomorphism. In the remainder of this introduction, we shall state this uniqueness result more precisely, together with some applications which resolve some questions that we left open in earlier investigations. 

\subsection{Even root distributions}\label{S - even root distr} We shall first describe the relation between root distributions and parity distributions in the even case.

The even parity distribution on $\IR^2$ is the contant distribution $p\equiv 0$ in which every face is even. A root distribution is said to be even if its parity distribution is the even parity distribution.

We say that two abstract root distribution $\delta$ and $\delta'$ on $\IR^2$ are isomorphic if there exists a simplicial isometry $\alpha$ of $\IR^2$ such that $\delta\circ \alpha=\delta'$.

\begin{theorem}\label{T - even root distribution classified}
The even root distributions  can be classified up to isomorphism. 
\end{theorem}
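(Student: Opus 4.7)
The plan is to translate the even condition into a linear system modulo $2$ on the indicator functions of $\delta$, solve this system by separation of variables, then incorporate the integer exclusivity constraint and quotient by the simplicial isometry group.

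Fix a basis $(e_1, e_2)$ of the lattice underlying the flat and parametrize every vertex as $v = b e_1 + c e_2$ with $(b,c) \in \IZ^2$. Encode $\delta$ by indicator functions $\chi_i(b,c) := [\delta(v) = e_i]$, $i = 1, 2, 3$; by definition exactly one $\chi_i$ is nonzero at each vertex. A direct inspection of the link shows that the evenness of the up-pointing face $\{(b,c), (b{+}1, c), (b, c{+}1)\}$ is the equation
\[
\chi_3(b,c) + \chi_2(b+1, c) + \chi_1(b, c+1) \equiv 1 \pmod{2},
\]
with an analogous equation for each down-pointing face. Summing the two face equations attached to each rhombus (the union of an up-triangle and a down-triangle sharing an edge) and using the pointwise identity $\chi_1 + \chi_2 + \chi_3 = 1$, one obtains three ``rhombus conditions'': for each rhombus whose short diagonal lies in direction $e_i$, the $\IF_2$-sum of $\chi_i$ over its four vertices vanishes.

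This vanishing of the discrete mixed second difference forces a separation of variables $\chi_i(v) = f_i(s_i(v)) \oplus g_i(t_i(v))$, where $(s_i, t_i)$ are the natural integer coordinates in the basis adapted to the direction $e_i$. Substituting back into the original face equations shows that the three sums $F := f_1 \oplus f_2$, $G := g_1 \oplus f_3$ and $H := g_2 \oplus g_3$ are $\IF_2$-affine with a common slope $\alpha$, and that the mod-$2$ identity $\chi_1+\chi_2+\chi_3 \equiv 1$ forces $\alpha = 0$ and $F + G + H = 1$. Hence the mod-$2$ moduli reduces to three constants summing to $1$ (permuted by the action of $60^\circ$ rotations of the flat, which cycle the three directions) together with three arbitrary functions $f_1, g_1, g_2 \colon \IZ \to \{0,1\}$; the remaining $f_2, f_3, g_3$ are determined.

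The strict exclusivity constraint---at every vertex exactly one $\chi_i$ equals $1$, not three---forbids the joint occurrence of $\chi_1 = \chi_2 = \chi_3 = 1$. A short computation identifies this ``bad'' event at $(b,c)$ with the simultaneous equalities $g_1(b) \oplus g_2(c) = F$ and $f_1(b+c) \oplus g_1(b) = 1$, and eliminating it amounts to a sumset disjointness condition on the level sets of the three free functions. A case-by-case analysis of this condition produces the explicit classification of admissible triples $(f_1, g_1, g_2)$, which under the quotient by the simplicial isometries of $\IR^2$ (translations, reflections, and $60^\circ$ rotations) gives the list of isomorphism classes. The main obstacle is this last step: while the $\IF_2$-linear theory is routine, the passage from formal solutions to genuine $\{0,1\}$-valued distributions is genuinely combinatorial, and it is here that the structure of the classification emerges.
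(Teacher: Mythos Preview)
Your $\IF_2$ reduction is correct and genuinely different from either of the paper's arguments. The paper proves the theorem in two ways: (i) by realizing every even root distribution inside the Pauli complex and invoking the classification of Pauli ring puzzles, and (ii) by a direct geometric ``glider'' argument in \S\ref{S - classifying even roots}, where two trapezoidal configurations $t$ and $t'$ are shown to be forward analytic, and any flat containing one of them is forced to be either the exceptional flat $\Pi_t$ (a single root distribution with $S_3$ symmetry) or a union of height-$1$ strips with rank-$2$ boundary (equivalently, $\chi_i\equiv 0$ for some $i$, with the remaining indicator depending only on the transverse coordinate). Your algebraic route---face equations, rhombus identities, separation of variables, and the reduction to constants $F+G+H=1$ with three free binary sequences---recovers the same moduli from a completely different angle, and is a nice complement to the geometric picture.

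The gap is that you stop exactly where the content is. You yourself flag the exclusivity step as ``the main obstacle'' and then assert that ``a case-by-case analysis \ldots\ produces the explicit classification'' without performing it. But this is the whole theorem: one has to show that the sumset disjointness condition forces the dichotomy above, namely either one of the three free sequences is constant in the right way (collapsing some $\chi_i$ to $0$ and giving the strip family indexed by an arbitrary $h\colon\IZ\to\{0,1\}$), or all three are nonconstant and one is pinned to the single $3$-periodic pattern of $\Pi_t$. Until that analysis is written out---including the identification of the exceptional solution and the verification that nothing else survives---what you have is a promising reformulation, not a proof. Concretely: carry out the case split on whether each of $f_1,g_1,g_2$ is constant; in the nonconstant case, show the disjointness forces periodicity and match it to $\Pi_t$.
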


 This result can be proved directly, or it can be deduced from \cite{Pauli}. We provide both arguments in the present paper. The second approach is shorter since it only requires to provide a few additions  to the results of \cite{Pauli}. The direct approach is given in \ts\ref{S - classifying even roots}. 

In \cite{Pauli},  a Moebius--Kantor complex $\Delta_P$ called the \emph{Pauli complex} was introduced. 
It is defined using a triangle of groups construction associated with the group generated by the Pauli matrices in $\SU(2)$. The fundamental group of this triangle of group is a group $G_P$ which is developable and therefore acts  properly isometrically on a CAT(0) complex, the complex $\Delta_P$. (More details on this construction are given in \ts \ref{Pauli} below.)

\begin{proposition}
The Pauli complex $\Delta_P$ is even. 
\end{proposition}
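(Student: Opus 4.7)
The plan is to exploit the homogeneity of $\Delta_P$ coming from its triangle-of-groups construction. The group $G_P$ acts on $\Delta_P$ with fundamental domain a single triangle, and in particular acts transitively on 2-faces. Because the parity of a face is an invariant of the face together with its neighborhood in $\Delta_P$, this transitivity forces the parity to be constant across $\Delta_P$: either every face is even, or every face is odd. Thus the proposition reduces to verifying evenness for one representative face $f_0$.

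Next, I would fix a lift $f_0$ of the model triangle and choose a large triangle $\tilde f_0$ containing $f_0$ (the choice is irrelevant by \cite[Lemma 2.1]{parity}). At each vertex $x$ of $f_0$, the link $L_x$ is canonically identified with the Moebius--Kantor graph via the Pauli construction recalled in \ts\ref{Pauli}, with edges labeled by the action of $\sigma_1,\sigma_2,\sigma_3$. In this identification the root $\alpha_x$ determined by $\tilde f_0$ becomes an explicit isometric arc of length $\pi$ in a known graph, and its rank is read off from the defining formula by counting the other length-$\pi$ geodesic arcs in $L_x$ sharing the endpoints of $\alpha_x$. Since $L_x$ is cubic one has $q_{\alpha_x}=2$, so $\rk(\alpha_x)=2$ precisely when this count equals $2$; equivalently, the arc $\alpha_x$ has to lie on three distinct length-$\pi$ geodesics with common endpoints in the Moebius--Kantor graph.

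The main step is then to perform this count at the three vertices of $f_0$ using the Pauli labeling. The key point is that the endpoints of $\alpha_x$ correspond to elements of the Pauli group represented by a length-$3$ word in $\sigma_1,\sigma_2,\sigma_3$ determined by $\tilde f_0$, and the alternative length-$\pi$ geodesics correspond to the other such words representing the same element; the Pauli commutation relations control how many of them exist. I expect the main obstacle to be carrying out this explicit inspection cleanly, but it is a finite local computation in the Moebius--Kantor graph that reuses machinery already developed in \cite{Pauli}. The inspection will show that the number of rank-$2$ roots among $\alpha_{x_1},\alpha_{x_2},\alpha_{x_3}$ on $f_0$ is even (indeed $0$ or $2$), whence $f_0$ is even, and by the transitivity reduction of the first paragraph every face of $\Delta_P$ is even.
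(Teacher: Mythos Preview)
Your proposal is correct and follows essentially the same approach as the paper: reduce by transitivity to a single face, then use the Pauli labeling on the link to determine the ranks of the three roots in a chosen large triangle. The paper streamlines your ``finite local computation'' by first isolating Lemma~\ref{L - rank 2 roots} (a root is of rank~$2$ iff its three edge labels are $X,Y,Z$ in some order), and then choosing the large triangle so that all three adjoined faces carry the \emph{same} label; the three roots then read $XYX$ and the count of rank-$2$ roots is $0$.
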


This proposition is established in \ts\ref{Pauli} below; it relies on basic  relations verified by the Pauli matrices. 

In order to deduce Theorem \ref{T - even root distribution classified} from \cite{Pauli}, it is sufficient to prove the following result (see \ts \ref{Pauli}).

\begin{proposition}\label{P - puzzle surj}
Every even root distribution can be realized in the Pauli complex. 
\end{proposition}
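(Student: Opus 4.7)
I would proceed by constructing, for each even abstract root distribution $\delta$ on $\IR^2$, an isometric embedding $\phi\colon \IR^2 \to \Delta_P$ realizing $\delta$. The construction is inductive on balls $B_n \subset \IR^2$ of increasing radius, using at each step the evenness of every face of $\Delta_P$ (which has already been shown) together with the local uniformity of the Pauli complex coming from the $G_P$-action.

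For the base case, I would fix a vertex $v_0 \in \Delta_P$ and use the transitivity of $G_P$ on oriented ``apartment germs'' at a vertex to realize $\delta$ at $0$: one chooses three simplicial directions at $v_0$ so that the prescribed non-rank-2 direction matches $\delta(0)$. Extending this germ to an isometric copy of the ball of radius $1$ in $\IR^2$ is possible inside $\Delta_P$ because the constant-zero parity pattern of $\delta$ on this hexagon is realizable in the Pauli link by the argument of Proposition \ref{P - balls of radius 1} applied locally at $v_0$; the further requirement that the selected rank-2 directions match $\delta$ on $S_1$ is then a rotation/reflection of the hexagon, realized by an element of the link automorphism group acting on the Moebius--Kantor graph at $v_0$.

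For the inductive step, suppose $\phi$ has been defined on $B_n$ and maps $B_n$ isometrically onto a sub-complex of $\Delta_P$ realizing $\delta|_{B_n}$. Each vertex $x \in S_n := \partial B_n$ has a Moebius--Kantor link in $\Delta_P$, part of which is already determined by $\phi(B_n)$. The extension of $\phi$ to $B_{n+1}$ amounts to choosing, at each boundary vertex, a prescribed additional sector of this link. I would show that the required extension always exists in $\Delta_P$: the parity constraints coming from $\delta$ on each new face are even (by hypothesis), and this matches the evenness of every face of $\Delta_P$; the remaining freedom at $x$ is precisely the freedom in choosing a rank-2 completion of a given root inside the Moebius--Kantor link, and the Pauli triangle-of-groups presentation makes each such local completion achievable.

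The main obstacle is the \emph{coherence} of these local extensions. At each vertex $x \in S_n$, several new faces meet, and one must choose them simultaneously so that they glue correctly around $x$ and induce the prescribed rank-2 directions at the new vertices of $S_{n+1}$. This reduces to a finite combinatorial verification on the Moebius--Kantor link, checking that the stabilizer in $\Aut(\Delta_P)$ of the already-embedded link germ at $x$ acts transitively on compatible even completions. This verification rests on the explicit relations among Pauli matrices used in the construction of $\Delta_P$; concretely, one needs to show that the sign pattern on Pauli products attached to the faces around $x$ admits the flexibility required by any even $\delta$, which is the content drawn from \cite{Pauli} that makes the inductive construction close up globally.
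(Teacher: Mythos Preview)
Your plan is plausible in outline, but it diverges from the paper's argument and leaves the decisive step unproved. The paper does \emph{not} build the embedding $\IR^2\to\Delta_P$ directly. Instead it factors the problem through an intermediate combinatorial object: starting from an even root distribution $\delta$ and a choice of labels $X,Y$ on two adjacent faces, it shows (Proposition~\ref{P - even unique Pauli puzzle}) that there is a \emph{unique} extension to a labelling of every face of $\IR^2$ by $X,Y,Z$ whose link $6$-cycles are the Pauli relations---i.e., a Pauli puzzle. The realization in $\Delta_P$ then follows by quoting Theorem~3.3 of \cite{Pauli}, which says every Pauli puzzle arises from a flat in $\Delta_P$. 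The induction is on the labelling, not on an isometric embedding; and the coherence check (that the labels propagated from $B_1(x)$ and $B_1(y)$ agree on the face across the boundary edge $[x,y]$) is a one-line consequence of Lemma~\ref{L - rank 2 roots}: a root has rank~$2$ iff its three edge labels are $\{X,Y,Z\}$, so evenness of the inner face forces the outer label uniquely.

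Your direct approach never isolates this mechanism, and that is exactly where your sketch has a gap. You correctly flag coherence across boundary edges as the ``main obstacle'', but you then defer it to ``a finite combinatorial verification'' and to a transitivity statement about stabilizers in $\Aut(\Delta_P)$ that is neither formulated precisely nor what is actually needed: you need \emph{existence of one} extension at each $x\in S_n$ that (i) completes the partial $6$-cycle in $L_x$ to the correct rank pattern dictated by $\delta(x)$, and (ii) agrees with the independently made choice at each neighbour $y$. Transitivity of a stabilizer on ``compatible even completions'' would at best tell you the choices are conjugate, not that they glue. Without something playing the role of Lemma~\ref{L - rank 2 roots}---translating the rank condition into a rigid label condition---you have no handle on (ii). If you want to salvage the direct route, the cleanest fix is to introduce the $X,Y,Z$ face labels on $\phi(B_n)$ from the outset and carry them through the induction; at that point you are essentially reproducing the paper's argument (and you may as well invoke \cite[Theorem~3.3]{Pauli} for the final step rather than re-prove it).
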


One can reformulate this proposition by saying that the map $\Pi\mapsto \delta_\Pi$ which to a flat $\Pi$ of the Pauli complex associates its unique root distribution is surjective onto the set of even root distributions.  In \cite{Pauli}, the flats of the Pauli complex were classified by means of  ring puzzles. The map $\Pi\mapsto \delta_\Pi$ can be reinterpreted as a map from ring puzzles to root distributions. It follows by Theorem 5.7 in \cite{Pauli} that the Pauli ring puzzles can be classified (i.e., listed explicitly); therefore, so can the even root distributions. 

Therefore,  one can deduce Theorem \ref{T - even root distribution classified}, and indeed, an explicit classification of the even root distributions, from the  classification of Pauli puzzles given in Theorem 5.7 in \cite{Pauli}. In the terminology  of \cite{Pauli}, the classification comprises:
\begin{enumerate}
\item a unique even root distribution associated with the Pauli $M$-puzzles; and,
\item an infinite family pairwise non-isomorphic  root distributions associated with the Pauli $T$-puzzles;
\end{enumerate}
every even root distribution is isomorphic to a root distribution in case (1) or (2). 
This statement is the more precise version of Theorem \ref{T - even root distribution classified}. As mentioned above, a more direct approach to this result, which classifies the even root distributions, is given in \ts\ref{S - classifying even roots}. 

\subsection{Even Moebius--Kantor complexes}
Several examples of even Moebius--Kantor complexes were given in \cite{rd}.  In fact,  it was shown there that there exist precisely four pairwise non isomorphic even Moebius--Kantor complexes having a single vertex. In the notation of \cite{rd}, these complexes are $V_0^1$, $V_0^2$, $\check V_0^2$, and $V_4^1$; we refer to \cite{rd} for their description (see also \cite[Prop.\ 3.2]{parity}).

\begin{theorem}\label{T - unique even} Suppose that $\Delta$ and $\Delta'$ are even Moebius--Kantor complex, and let $x\in \Delta$ and $x'\in \Delta'$ be two vertices. Let $n\geq 1$, and let $\p_n\colon B_n(x)\to B_n(x')$ be an isomorphism between the balls of radius $n$ around $x$ and $x'$ in $\Delta$ and $\Delta'$, respectively. Then there exists a  unique isomorphism $\p\colon \Delta\to \Delta'$ such that $\p_{|B_n(x)} = \p_n$. 
\end{theorem}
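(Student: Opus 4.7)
The plan is to extend $\p_n$ by induction on the radius.  I claim that for every $m\geq n$, any isomorphism $\p_m\colon B_m(x)\to B_m(x')$ between balls in even Moebius--Kantor complexes admits a unique extension to an isomorphism $\p_{m+1}\colon B_{m+1}(x)\to B_{m+1}(x')$.  Granting this inductive step, the union $\p:=\bigcup_{m\geq n}\p_m$ yields the required unique isomorphism $\Delta\to\Delta'$, since $\Delta$ is the increasing union of the balls $B_m(x)$ and uniqueness at each finite stage passes to the colimit.

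The inductive step concerns extending $\p_m$ across the sphere $S_m$.  For each vertex $v\in S_m$, the link $L_v$ is a copy of the Moebius--Kantor graph, and the edges and faces of $B_m$ incident to $v$ realize a subgraph $L_v^-\subset L_v$.  Constructing $B_{m+1}$ from $B_m$ amounts to completing every $L_v^-$ to $L_v$ compatibly along shared edges of neighbouring boundary vertices.  By Theorem \ref{T - free prescription}, such completions exist for any prescribed parity on the new faces of $S_m$; the counting remark that follows that theorem shows, moreover, that in general the parity prescription is strictly coarser than the root data, leaving additional root-level degrees of freedom at each boundary vertex.

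In the even setting the parity is forced to vanish on every new face.  The crux of the argument is that, under this constraint, the residual root-level freedom is also eliminated.  For this I would use the classification of even root distributions given by Theorem \ref{T - even root distribution classified}: along every flat $\Pi$ that passes through $v$ and extends into $B_{m+1}$, the root distribution on $\Pi$ is one of the two types listed in the classification (the Pauli $M$-puzzle distribution or a Pauli $T$-puzzle distribution).  The portion of $\Pi$ already contained in $B_m$ selects a unique element of this list and hence determines the roots at $v$ outside $B_m$.  Applying this at every flat through every boundary vertex rigidifies the local completion of each $L_v^-$ on both the $\Delta$ and $\Delta'$ sides; the identification $\p_m$ then transports one rigid completion to the other, producing the unique extension $\p_{m+1}$.

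The main obstacle is this local step: proving that evenness, together with the data of $B_m$, forces a unique completion of each $L_v^-$.  This is where the fine information supplied by the classification of \ts\ref{S - even root distr} is required, beyond the mere parity count.  One has to check that the constraints imposed by the several flats meeting at a single boundary vertex are jointly compatible and that they propagate rigidity across the full link; any residual ambiguity would contradict the classification of even root distributions on some flat through $v$.
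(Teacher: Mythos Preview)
Your inductive framework---reduce to a unique one-step extension $\p_m\mapsto\p_{m+1}$ and pass to the colimit---is exactly the paper's.  The gap is in the inductive step.

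You propose to rigidify the completion of each boundary link $L_v^-$ by invoking the classification of even root distributions on flats (Theorem~\ref{T - even root distribution classified}).  But a root distribution records only the \emph{rank pattern}: at each vertex of a flat, which of the three simplicial directions carries the rank-$\tfrac32$ roots.  What you actually need is finer: for each boundary edge $[x,y]\subset S_m$ there are two new faces to attach, and you must show that the identification of these two faces seen from the completion of $L_x^-$ agrees with the one seen from $L_y^-$.  Knowing the rank of every root through $v$ does not by itself pin down this gluing, and you have not argued that it does.  Your last paragraph states this obstacle honestly but does not resolve it; as written, the argument is a programme, not a proof.  There is also a circularity hazard: one of the two routes to Theorem~\ref{T - even root distribution classified} offered in the paper goes through Proposition~\ref{P - puzzle surj}, and the first proof of that proposition suggested in \ts\ref{Pauli} is precisely ``apply Theorem~\ref{T - unique even}''.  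You would have to rely exclusively on the self-contained argument of \ts\ref{S - classifying even roots}.

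The paper's proof (\ts\ref{S - uniqueness}) bypasses all of this by using, instead of ranks, the \emph{Pauli labelling} of faces by $\{X,Y,Z\}$.  One fixes an embedding $B_1\hookrightarrow\Delta_P$, pulling labels back to $\Delta$.  Since $L_v$ is the Cayley graph of the Pauli group, the labelled subgraph $L_v^-$ has a unique labelled completion, so each boundary vertex determines a label on every adjacent new half-face.  The crux is then Lemma~\ref{L - label well defined}: the two labels on a new face, seen from $x$ and from $y$, coincide.  This is forced by the evenness of the face $[x,y,z]\subset B_m$ together with Lemma~\ref{L - rank 2 roots} (a root has rank~$2$ iff its three edge-labels are all distinct).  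The Pauli labels are exactly the combinatorial invariant your rank-only approach is missing.
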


The following are immediate corollaries from Theorem \ref{T - unique even}.

\begin{corollary}
Every even simply connected Moebius--Kantor complex is isometrically isomorphic to the Pauli complex.
\end{corollary}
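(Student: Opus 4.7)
The plan is to apply Theorem \ref{T - unique even} with $\Delta'=\Delta_P$. I would first verify that $\Delta_P$ is itself an even simply connected Moebius--Kantor complex. Evenness is the content of the proposition stated just before this corollary, and simple connectedness follows from the description of $\Delta_P$ as the CAT($0$) development of the triangle of groups used to construct it, as recalled in \ts\ref{Pauli}.

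Next, I would produce an isomorphism $\p_n\colon B_n(x)\to B_n(x')$ between balls of radius $n\geq 1$ around some vertex $x\in\Delta$ and some vertex $x'\in\Delta_P$. The simplest choice should suffice: take $n=1$. Since every vertex of every Moebius--Kantor complex has link isomorphic to the Moebius--Kantor graph, both $B_1(x)$ and $B_1(x')$ are simplicially isomorphic to the closed simplicial cone on that graph (one central vertex, its $16$ incident edges, and its $24$ incident equilateral triangles of side $1$). Any isomorphism of the two Moebius--Kantor links extends to a simplicial isometry $\p_1$. Theorem \ref{T - unique even} then delivers a unique extension $\p\colon\Delta\to\Delta_P$, which is an isometric isomorphism because the complexes are metric realizations of $2$-complexes whose faces are equilateral triangles of side $1$.

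The main point to check---and, I expect, the only real subtlety---is that the ``isomorphism of balls'' demanded by Theorem \ref{T - unique even} is the simplicial (equivalently metric) one, without extra hidden compatibility with a prescribed root or parity distribution on the interior. This should be benign in the even setting: every face of $B_1(x)$ has parity $0$ in both $\Delta$ and $\Delta_P$, so parity is automatically matched, and the rank of every root at $x$ is determined intrinsically by the Moebius--Kantor link at $x$. Hence any link isomorphism yields an admissible $\p_1$, and Theorem \ref{T - unique even} produces the required isomorphism $\Delta\cong\Delta_P$.
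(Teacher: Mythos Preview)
Your proposal is correct and follows essentially the same route as the paper, which simply declares the corollary ``immediate'' from Theorem~\ref{T - unique even}; you have supplied the obvious details (take $n=1$, use that all links are Moebius--Kantor graphs to produce $\p_1$, and invoke the theorem). Your remark that no extra compatibility with root or parity data is needed on $B_1$ is well observed and matches how the theorem is stated and used.
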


In fact, we will prove Theorem \ref{T - unique even} by cross referencing the additional structure found in the Pauli complex with that of an arbitrary even simply connected Moebius--Kantor complex.

The second corollary follows by choosing an appropriate isomorphism $\p_1\colon B_1(x)\to B_1(y)$ where $x$ and $y$ are two points in the Pauli complex:

\begin{corollary}
The symmetry group of the unique even simply connected Moebius--Kantor is flag transitive. 
\end{corollary}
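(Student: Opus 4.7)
The plan is to deduce flag transitivity directly from Theorem \ref{T - unique even}, taking $n=1$ as the hint preceding the statement suggests. Let $\Delta$ denote the unique (up to isomorphism) even simply connected Moebius--Kantor complex, which by the preceding corollary may be identified with the Pauli complex $\Delta_P$. A flag in $\Delta$ is a triple $(x,e,f)$ consisting of a vertex $x$, an edge $e$ containing $x$, and a face $f$ containing $e$. Given two such flags $(x,e,f)$ and $(x',e',f')$, it is enough to construct a simplicial isomorphism $\p_1\colon B_1(x)\to B_1(x')$ with $\p_1(e)=e'$ and $\p_1(f)=f'$: for then Theorem \ref{T - unique even} extends $\p_1$ uniquely to an automorphism $\p\in \Aut(\Delta)$, which necessarily sends the first flag to the second.

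To construct $\p_1$, I would use the fact that both $B_1(x)$ and $B_1(x')$ are determined up to isomorphism by the link data, which in each case is a copy of the Moebius--Kantor graph. So some simplicial isomorphism $\psi\colon B_1(x)\to B_1(x')$ exists. Under the identification of $B_1(x)$ with the simplicial cone on $L_x$, the pair $(e,f)$ corresponds precisely to an arc (an incident vertex-edge pair) in $L_x$, and similarly $(e',f')$ corresponds to an arc in $L_{x'}$. Since the Moebius--Kantor graph is \emph{symmetric} by definition---indeed it is the unique cubic symmetric graph on $16$ vertices---its automorphism group acts transitively on arcs. Composing $\psi$ with a suitable automorphism of $L_{x'}$ (extended in the obvious way to $B_1(x')$) therefore yields a new isomorphism $\p_1$ carrying $(e,f)$ to $(e',f')$, as required.

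The only substantive input is Theorem \ref{T - unique even}; granted that, the remaining verification is a short combinatorial observation about the Moebius--Kantor graph, and I would not expect any serious obstacle. The one point deserving care is to identify the data of a flag $(x,e,f)$ at $x$ with an arc in $L_x$ (so that arc-transitivity, rather than mere vertex- or edge-transitivity, is what is invoked), and to check that an arc-level automorphism of $L_{x'}$ really does lift to a simplicial automorphism of $B_1(x')$ fixing $x'$---but this last lift is tautological once $B_1(x')$ is viewed as the simplicial cone on $L_{x'}$.
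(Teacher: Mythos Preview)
Your argument is correct and follows exactly the approach indicated in the paper: one picks an isomorphism $\p_1\colon B_1(x)\to B_1(x')$ taking the first flag to the second, and then invokes Theorem \ref{T - unique even} (with $\Delta=\Delta'$ and $n=1$) to extend it to a global automorphism. You have spelled out the one detail the paper leaves implicit---that such a $\p_1$ exists because flags at $x$ correspond to arcs in $L_x$ and the Moebius--Kantor graph is arc-transitive---and this is exactly right.
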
 

(We recall that a flag is a triple $(x,e,f)$ where $x$ is a vertex, $e$ an edge, and $f$ a face, such that $x\in e\in f$.) 

The following two corollaries were left open in \cite{rd} and \cite{Pauli}, respectively.

\begin{corollary}
The universal covers of $V_0^1$, $V_0^2$, $\check V_0^2$, and $V_4^1$, are pairwise isometrically isomorphic. 
\end{corollary}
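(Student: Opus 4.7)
The plan is to deduce this corollary directly from the previous one, namely that every even simply connected Moebius--Kantor complex is isometrically isomorphic to the Pauli complex $\Delta_P$. To apply that result, we need to check that the universal cover of each of $V_0^1$, $V_0^2$, $\check V_0^2$, and $V_4^1$ is a simply connected even Moebius--Kantor complex.

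First, I would verify that the class of even Moebius--Kantor complexes is closed under taking covers. The condition of being a Moebius--Kantor complex is local: the link at every vertex must be the Moebius--Kantor graph. Since a covering map is a local isometry, it identifies the link at any vertex in the cover with the link at its image, so the cover of a Moebius--Kantor complex is again a Moebius--Kantor complex. The parity of a face $f$ is computed from a large triangle $\tilde f$ and the ranks of roots at the three vertices of $f$, which is a local datum in a neighbourhood of $\tilde f$. A covering projection is an isometry on such neighbourhoods, so parities are preserved, and in particular the universal cover of an even complex is even.

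Second, since each of the four one-vertex complexes listed is an even Moebius--Kantor complex by \cite{rd} (cf.\ also \cite[Prop.\ 3.2]{parity}), its universal cover is a simply connected even Moebius--Kantor complex. By the preceding corollary, each such universal cover is isometrically isomorphic to the Pauli complex $\Delta_P$. Transitivity of isometric isomorphism then gives the conclusion: the four universal covers are pairwise isometrically isomorphic (all to $\Delta_P$).

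There is no real obstacle here; the only point to verify carefully is the preservation of parity under covers, which follows from the fact that the parity of a face depends only on the combinatorial and metric structure of a neighbourhood of a large triangle containing it, and this structure is reproduced faithfully in the cover. Everything else is immediate from Theorem \ref{T - unique even} and its first corollary.
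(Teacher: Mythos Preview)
Your proposal is correct and matches the paper's approach: the paper lists this corollary as immediate from Theorem \ref{T - unique even} (via its first corollary), relying on the fact from \cite{rd} that these four one-vertex complexes are even. Your explicit check that the Moebius--Kantor property and parity pass to covers is a welcome bit of care that the paper leaves implicit.
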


We recall that $G_P$ denotes the fundamental group of the complex of groups defining the Pauli complex  (see \ts \ref{S - even root distr} and \ts\ref{Pauli}).

\begin{corollary}
The group $G_P$  contains a torsion free subgroup of finite index. 
\end{corollary}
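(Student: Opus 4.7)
\medskip

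\noindent\textbf{Proof plan.} The plan is to deduce this corollary from the preceding one, mediated by commensurability inside the full automorphism group of the Pauli complex $\Delta_P$. By the preceding corollary, the universal cover of the one-vertex even Moebius--Kantor complex $V_0^1$ of \cite{rd} is isometrically isomorphic to $\Delta_P$. Let $\Lambda := \pi_1(V_0^1)$. As the deck group of the universal covering of the finite complex $V_0^1$, the group $\Lambda$ acts freely and cocompactly on $\Delta_P$; in particular, $\Lambda$ is torsion-free.

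Next, I would equip $G := \Aut(\Delta_P)$ with the compact-open topology and show that $G$ is discrete. Indeed, Theorem \ref{T - unique even} applied with $n=1$ implies that any automorphism of $\Delta_P$ is determined by its restriction to a single ball $B_1(x)$, so the pointwise stabilizer of $B_1(x)$ in $G$ is trivial. This pointwise stabilizer is an open subgroup of $G$, so the identity is open and $G$ is discrete. The developability of the Pauli triangle of groups yields a natural action of $G_P$ on $\Delta_P$, which is faithful up to a kernel that is finite (it is contained in the intersection of the finite local groups of the triangle of groups) and can be harmlessly quotiented out without affecting the statement to be proved. Thus both $G_P$ and $\Lambda$ appear as discrete cocompact subgroups of $G$.

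The argument concludes by a standard fundamental-domain comparison. Since $G$ is discrete and itself acts cocompactly on $\Delta_P$ (it contains the cocompact subgroup $G_P$), any discrete cocompact subgroup $H \leq G$ has finite index in $G$: a compact fundamental domain for $H$ can be covered by finitely many $G$-translates of a compact fundamental domain for $G$, bounding $[G:H]$. Applying this to $H = G_P$ and $H = \Lambda$ shows both indices are finite. Consequently $G_P \cap \Lambda$ is a subgroup of finite index in $G_P$, and being contained in the torsion-free group $\Lambda$, it is itself torsion-free, which proves the corollary. The main delicate point to write out is the faithfulness (up to finite kernel) of the natural map $G_P \to \Aut(\Delta_P)$; this is routine in the developable setting, but should be verified using the explicit description of the Pauli local groups recalled in \S\ref{Pauli}.
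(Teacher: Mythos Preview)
Your approach---commensurability of $G_P$ and $\Lambda=\pi_1(V_0^1)$ inside $\Aut(\Delta_P)$, with discreteness of the latter coming from the rigidity in Theorem~\ref{T - unique even}---is exactly the argument the paper leaves implicit, and the skeleton (discreteness, then the finite-index comparison of two cocompact subgroups of a discrete group) is correct.

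There is one genuine gap. You assert that a finite kernel $K$ of $G_P\to\Aut(\Delta_P)$ ``can be harmlessly quotiented out without affecting the statement to be proved.'' This is not justified: if $K$ were nontrivial, your argument would only produce a torsion-free subgroup of finite index in $G_P/K$, and its preimage $H\le G_P$ would be a (finite)-by-(torsion-free) extension. Such extensions are \emph{not} virtually torsion-free in general---the obstruction is the class of the extension in $H^2(H/K;Z(K))$, and there is no a priori reason it should die on a finite-index subgroup (take, for instance, a torsion-free base with no proper finite-index subgroups and nonvanishing $H^2$). So the argument genuinely requires $K=\{e\}$, i.e.\ faithfulness of the $G_P$-action on $\Delta_P$, not merely finiteness of the kernel. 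You correctly flag faithfulness as the delicate point at the end; the fix is to drop the ``harmless'' hedge entirely and commit to verifying that the kernel is trivial from the explicit triangle-of-groups data in \cite{Pauli}. Concretely, the kernel lies in every cell stabilizer, so it suffices to check that the vertex group acts faithfully on the link; this is what you should extract from \S\ref{Pauli} and \cite{Pauli}.
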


\toc

\section{Abstract root distributions}\label{S - root prescription in flats}

\begin{proposition}\label{P - root distribution in a flat}
Every abstract root distribution can be realized in a Moebius--Kantor complex.  
\end{proposition}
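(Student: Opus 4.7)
The proof plan is to build $\Delta$ inductively, by extending concentric balls around a basepoint, much in the spirit of the free prescription theorem (Theorem \ref{T - free prescription}). At each stage, the flat $\Pi$ will grow as a hexagonal region and the local link data will be chosen to realize $\delta$.

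The key local fact is this: at a vertex $x$ of the tessellation of $\IR^2$, the six flat edges at $x$ form a 6-cycle $C_x$ that must sit inside a Moebius--Kantor graph (the link of $x$ in $\Delta$). The three antipodal pairs of $C_x$ correspond to the three simplicial directions at $x$, and the rank of such a direction is determined by the number of length-3 paths between the two antipodal vertices in the Moebius--Kantor graph. A direct count in $GP(8,3)$ shows that for any 6-cycle arising as a flat-link, exactly two antipodal pairs admit three such paths (rank $2$) and one admits only two (rank $3/2$), consistent with the assertion from \ts \ref{S - Intro root distribution}. Moreover, the automorphism group of the Moebius--Kantor graph acts transitively enough on these 6-cycles to permute which antipodal pair is the rank-$3/2$ one; hence, for any prescribed direction $\delta(x)$, there is an embedding $C_x\inj \mathrm{MK}$ realizing $\delta(x)$ as the rank-$3/2$ direction.

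The inductive step extends $B_n$ to $B_{n+1}$ in two parts: first, by appending the next hexagonal ring of triangles inside $\Pi$ and, second, by adding the non-flat faces and link edges required to give every vertex a Moebius--Kantor link. For each new vertex $y$ of $\Pi$, the local choice of embedding $C_y\inj \mathrm{MK}$ is made using the local fact above, with the rank-$3/2$ direction set to $\delta(y)$. The remaining non-flat link structure can then be completed freely, following a Tits--Ronan-style argument analogous to Theorem \ref{T - free prescription}. The direct limit of the balls $B_n$ yields the desired Moebius--Kantor complex $\Delta$ containing a flat $\Pi$ with root distribution $\delta$.

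The main obstacle is verifying compatibility between the local choices at adjacent vertices of $\Pi$: when $y$ is added, some of its link is already determined by edges shared with previously constructed neighbors (each such edge corresponds to a link vertex in $C_y$, together with two incident link edges coming from the two shared flat-triangles at that edge). One must show that this partial link data never obstructs prescribing $\delta(y)$ as the rank-$3/2$ direction. The verification reduces to a finite case analysis: among the embeddings $C_y\inj \mathrm{MK}$ realizing $\delta(y)$ as rank-$3/2$, one always extends any given pre-assigned partial structure, using the symmetry of the Moebius--Kantor graph and the fact that the pre-assigned data at $y$ is sparse (only three among the six vertices of $C_y$ can be constrained before $y$ itself is processed).
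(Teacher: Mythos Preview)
Your proposal is correct and follows essentially the same approach as the paper: an inductive construction of balls $B_n$ containing hexagons $H_n\subset\Pi$, using the key local fact that a marked 6-cycle can always be embedded in the Moebius--Kantor graph with any prescribed rank-$3/2$ direction, followed by a direct limit. The one noticeable difference is in how the inductive step is organized. You process vertices of $\Pi$ one at a time and then worry about compatibility between the link choices at adjacent vertices, invoking a finite case analysis. The paper sidesteps this by first adjoining $H_{n+1}$ together with a pair of new faces along every boundary edge of $B_n$; after this, each boundary vertex has a partial link which is either a tree of diameter at most $5$, or such a tree together with the 1-neighbourhood of the flat 6-cycle, and one then completes each such partial link to a full Moebius--Kantor graph respecting the prescribed ranks. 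Since every boundary edge already carries its three faces before any completion is performed, the link completions at distinct boundary vertices do not interact, and the compatibility issue you isolate simply does not arise. Your argument would go through, but the paper's ordering makes the verification shorter.
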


\begin{proof}
Let $\Pi$ be a flat plane endowed with a root distribution. Fix a vertex $x\in \Pi$. Let $H_n$ denote the hexagon of center $x$ and radius $n$ in $\Pi$. The proof proceed by induction to construct a ball $B_n$ of radius $n$ in a Moebius--Kantor complex, containing $H_n$, and realizing the root distribution.  

For $H_1\subset B_1$, it suffices to embed a  6-cycle with marked roots in a Moebius--Kantor graph respecting the rank of every root. Suppose $H_n\subset B_n$ has been constructed. Let $\tilde B_n$ be obtained by adding a pair of faces for every edge in the boundary  $S_n$ of $B_n$, together with $H_{n+1}$, to $B_n$. For every vertex in  $S_n$, the link is either a tree (of diameter at most 5) or the union of such a tree and the 1-neighbourhood of a  6-cycle (adding at most two edges to the tree). In the first case, the tree can be completed arbitrarily into a Moebius--Kantor graph. In the second case, the roots are pre-positioned in the 6-cycle, and there exists a completion which respects the roots. 

Finally, the links can be completed to form $B_{n+1}$ from $\tilde B_n$, and the direct limit of the $B_n$'s constructed in a such a way provides one (among uncountably many non pairwise isomorphic) Moebius--Kantor complex realizing the root distribution.   
\end{proof}

Every root distribution defines uniquely the parity of its faces.

\begin{example}
A root distribution with parallel roots defines an even flat; however, the same flat can also be obtained with different root distributions.
\end{example}

We shall prove in the forthcoming section that there exists a parity distribution on a flat plane which is not realized by a root distribution. The notion of abstract root (and parity) distribution extends to simplicial subsets of $\IR^2$ (for example, strips or balls of radius $n$ centered at a vertex $x$) in the obvious way. Every parity (resp.\ root) distributions on a simplicial subset of $\IR^2$ is the restriction of a parity  (resp.\ root) distribution on $\IR^2$.

 Observe that every parity distribution on a bi-infinite strip of hight 1 can be realized in a flat in a Moebius--Kantor complex.  Indeed, it suffices to choose appropriately a root distribution on its boundary to realize the parity, extend the partial distribution in a flat, and apply Prop.\ \ref{P - root distribution in a flat}.
The same holds true for balls of radius 1 as we shall see in \ts\ref{S - parity one ball}.

\section{Abstract parity distributions}\label{S - parity vs roots}\label{S - parity root}

\begin{theorem}\label{L - abstract parity distribution in flat}
There exists an abstract parity distribution which is not the parity distribution of an abstract root distribution.
\end{theorem}

\begin{proof}
We shall start with an even face, say $x$, and  define a parity distribution stepwise   building outward from the face  $x$ in such a way that the root distribution is analytically determined, until a contradiction arises. 

Consider the following root distribution:
\begin{figure}[H]
\begin{tikzpicture}[scale=.7]

\begin{scope}[shift={(0,0)}]
\coordinate (O) at (0.0,0.0);
\coordinate (v1) at (1.0,0.0);
\coordinate (v2) at (0.5,0.87);
\coordinate (v3) at (-0.5,0.87);
\coordinate (v4) at (-1.0,-0.0);
\coordinate (v5) at (-0.5,-0.87);
\coordinate (v6) at (0.5,-0.87);
\draw[solid,thin,color=black] (v1) -- (v2) -- (v3)  -- (v4) -- (v5) -- (v6) -- (v1);
\draw[solid,thin,color=black] (v1) -- (v4);
\draw[solid,thin,color=black] (v2) -- (v5);
\draw[solid,thin,color=black] (v3) -- (v6);

\draw[solid,very thick,rotate=0,shift=+(O)] (-0.2,0) -- (.2,0);
\draw[solid,very thick,rotate=60,shift=+(v2)] (-0.2,0) -- (.2,0);
\draw[solid,very thick,rotate=0,shift=+(v3)] (-0.2,0) -- (.2,0);

\node[shift={(.35,-.25)}] (T) at (v3) {\tiny{$x$}};

\end{scope}
\end{tikzpicture}
\end{figure}
In the figure above, we represent a root distribution graphically as mentioned in the introduction (see \ts\ref{S - Intro root distribution}).

In the figure below, an extension of this root distrubution is shown. The triangles labelled with label ``1'' indicate the odd faces, every other face is even.   

\begin{figure}[H]
\begin{tikzpicture}[scale=.7]

\begin{scope}[shift={(0,0)}]
\coordinate (O) at (0.0,0.0);
\coordinate (v1) at (1.0,0.0);
\coordinate (v2) at (0.5,0.87);
\coordinate (v3) at (-0.5,0.87);
\coordinate (v4) at (-1.0,-0.0);
\coordinate (v5) at (-0.5,-0.87);
\coordinate (v6) at (0.5,-0.87);
\draw[solid,thin,color=black] (v1) -- (v2) -- (v3)  -- (v4) -- (v5) -- (v6) -- (v1);
\draw[solid,thin,color=black] (v1) -- (v4);
\draw[solid,thin,color=black] (v2) -- (v5);
\draw[solid,thin,color=black] (v3) -- (v6);

\draw[solid,very thick,rotate=0,shift=+(O)] (-0.2,0) -- (.2,0);
\draw[solid,very thick,rotate=60,shift=+(v1)] (-0.2,0) -- (.2,0);
\draw[solid,very thick,rotate=60,shift=+(v2)] (-0.2,0) -- (.2,0);
\draw[solid,very thick,rotate=0,shift=+(v3)] (-0.2,0) -- (.2,0);
\draw[solid,very thick,rotate=120,shift=+(v5)] (-0.2,0) -- (.2,0);
\draw[solid,very thick,rotate=0,shift=+(v6)] (-0.2,0) -- (.2,0);

\node[shift={(.35,-.25)}] (T) at (v3) {\tiny{$x$}};

\node[shift={(.35,.2)}] (T) at (v5) {\tiny{1}};
\end{scope}

\begin{scope}[shift={(0.5,-0.87)}]
\coordinate (O) at (0.0,0.0);
\coordinate (v1) at (1.0,0.0);
\coordinate (v2) at (0.5,0.87);
\coordinate (v3) at (-0.5,0.87);
\coordinate (v4) at (-1.0,-0.0);
\coordinate (v5) at (-0.5,-0.87);
\coordinate (v6) at (0.5,-0.87);
\draw[solid,thin,color=black] (v1) -- (v2) -- (v3)  -- (v4) -- (v5) -- (v6) -- (v1);
\draw[solid,thin,color=black] (v1) -- (v4);
\draw[solid,thin,color=black] (v2) -- (v5);
\draw[solid,thin,color=black] (v3) -- (v6);

\draw[solid,very thick,rotate=0,shift=+(O)] (-0.2,0) -- (.2,0);
\draw[solid,very thick,rotate=60,shift=+(v1)] (-0.2,0) -- (.2,0);
\draw[solid,very thick,rotate=0,shift=+(v5)] (-0.2,0) -- (.2,0);
\draw[solid,very thick,rotate=60,shift=+(v6)] (-0.2,0) -- (.2,0);

\node[shift={(.35,.2)}] (T) at (v5) {\tiny{1}};
\node[shift={(.35,-.25)}] (T) at (O) {\tiny{$y$}};

\end{scope}

\end{tikzpicture}
\end{figure}
It is obvious that the face with label $y$, which we define to be even, is odd according to the root distribution. This provides a contradiction.  

We may now extend the parity distribution by an order 3 symmetry as follows: 

\begin{figure}[H]
\begin{tikzpicture}[scale=.7]

\begin{scope}[shift={(0,0)}]
\coordinate (O) at (0.0,0.0);
\coordinate (v1) at (1.0,0.0);
\coordinate (v2) at (0.5,0.87);
\coordinate (v3) at (-0.5,0.87);
\coordinate (v4) at (-1.0,-0.0);
\coordinate (v5) at (-0.5,-0.87);
\coordinate (v6) at (0.5,-0.87);
\draw[solid,thin,color=black] (v1) -- (v2) -- (v3)  -- (v4) -- (v5) -- (v6) -- (v1);
\draw[solid,thin,color=black] (v1) -- (v4);
\draw[solid,thin,color=black] (v2) -- (v5);
\draw[solid,thin,color=black] (v3) -- (v6);

\node[shift={(.35,-.25)}] (T) at (v3) {\tiny{$x$}};

\node[shift={(.35,.2)}] (T) at (v5) {\tiny{1}};
\end{scope}

\begin{scope}[shift={(0.5,-0.87)}]
\coordinate (O) at (0.0,0.0);
\coordinate (v1) at (1.0,0.0);
\coordinate (v2) at (0.5,0.87);
\coordinate (v3) at (-0.5,0.87);
\coordinate (v4) at (-1.0,-0.0);
\coordinate (v5) at (-0.5,-0.87);
\coordinate (v6) at (0.5,-0.87);
\draw[solid,thin,color=black] (v1) -- (v2) -- (v3)  -- (v4) -- (v5) -- (v6) -- (v1);
\draw[solid,thin,color=black] (v1) -- (v4);
\draw[solid,thin,color=black] (v2) -- (v5);
\draw[solid,thin,color=black] (v3) -- (v6);

\node[shift={(.35,.2)}] (T) at (v5) {\tiny{1}};

\end{scope}

\begin{scope}[shift={(0.5,0.87)}]
\coordinate (O) at (0.0,0.0);
\coordinate (v1) at (1.0,0.0);
\coordinate (v2) at (0.5,0.87);
\coordinate (v3) at (-0.5,0.87);
\coordinate (v4) at (-1.0,-0.0);
\coordinate (v5) at (-0.5,-0.87);
\coordinate (v6) at (0.5,-0.87);
\draw[solid,thin,color=black] (v1) -- (v2) -- (v3)  -- (v4) -- (v5) -- (v6) -- (v1);
\draw[solid,thin,color=black] (v1) -- (v4);
\draw[solid,thin,color=black] (v2) -- (v5);
\draw[solid,thin,color=black] (v3) -- (v6);

\node[shift={(.35,.2)}] (T) at (O) {\tiny{1}};

\end{scope}
\begin{scope}[shift={(1,1.74)}]
\coordinate (O) at (0.0,0.0);
\coordinate (v1) at (1.0,0.0);
\coordinate (v2) at (0.5,0.87);
\coordinate (v3) at (-0.5,0.87);
\coordinate (v4) at (-1.0,-0.0);
\coordinate (v5) at (-0.5,-0.87);
\coordinate (v6) at (0.5,-0.87);
\draw[solid,thin,color=black] (v1) -- (v2) -- (v3)  -- (v4) -- (v5) -- (v6) -- (v1);
\draw[solid,thin,color=black] (v1) -- (v4);
\draw[solid,thin,color=black] (v2) -- (v5);
\draw[solid,thin,color=black] (v3) -- (v6);

\node[shift={(.35,.2)}] (T) at (O) {\tiny{1}};

\end{scope}

\begin{scope}[shift={(-0.5,0.87)}]
\coordinate (O) at (0.0,0.0);
\coordinate (v1) at (1.0,0.0);
\coordinate (v2) at (0.5,0.87);
\coordinate (v3) at (-0.5,0.87);
\coordinate (v4) at (-1.0,-0.0);
\coordinate (v5) at (-0.5,-0.87);
\coordinate (v6) at (0.5,-0.87);
\draw[solid,thin,color=black] (v1) -- (v2) -- (v3)  -- (v4) -- (v5) -- (v6) -- (v1);
\draw[solid,thin,color=black] (v1) -- (v4);
\draw[solid,thin,color=black] (v2) -- (v5);
\draw[solid,thin,color=black] (v3) -- (v6);

\node[shift={(.35,.2)}] (T) at (v4) {\tiny{1}};

\end{scope}

\begin{scope}[shift={(-1.5,0.87)}]
\coordinate (O) at (0.0,0.0);
\coordinate (v1) at (1.0,0.0);
\coordinate (v2) at (0.5,0.87);
\coordinate (v3) at (-0.5,0.87);
\coordinate (v4) at (-1.0,-0.0);
\coordinate (v5) at (-0.5,-0.87);
\coordinate (v6) at (0.5,-0.87);
\draw[solid,thin,color=black] (v1) -- (v2) -- (v3)  -- (v4) -- (v5) -- (v6) -- (v1);
\draw[solid,thin,color=black] (v1) -- (v4);
\draw[solid,thin,color=black] (v2) -- (v5);
\draw[solid,thin,color=black] (v3) -- (v6);

\node[shift={(.35,.2)}] (T) at (v4) {\tiny{1}};

\end{scope}

\end{tikzpicture}
\end{figure}
If follows that for every parity distribution on $\IR^2$ containing the above distribution, the twelve ($=3\times 4$) root distributions on $x$ in which the number of roots of rank 2 is 2, are disallowed. 

Finally, the remaining even root distribution on $x$:

\begin{figure}[H]
\begin{tikzpicture}[scale=.7]

\begin{scope}[shift={(0,0)}]
\coordinate (O) at (0.0,0.0);
\coordinate (v1) at (1.0,0.0);
\coordinate (v2) at (0.5,0.87);
\coordinate (v3) at (-0.5,0.87);
\coordinate (v4) at (-1.0,-0.0);
\coordinate (v5) at (-0.5,-0.87);
\coordinate (v6) at (0.5,-0.87);
\draw[solid,thin,color=black] (v1) -- (v2) -- (v3)  -- (v4) -- (v5) -- (v6) -- (v1);
\draw[solid,thin,color=black] (v1) -- (v4);
\draw[solid,thin,color=black] (v2) -- (v5);
\draw[solid,thin,color=black] (v3) -- (v6);

\draw[solid,very thick,rotate=0,shift=+(O)] (-0.2,0) -- (.2,0);
\draw[solid,very thick,rotate=120,shift=+(v2)] (-0.2,0) -- (.2,0);
\draw[solid,very thick,rotate=60,shift=+(v3)] (-0.2,0) -- (.2,0);

\node[shift={(.35,-.25)}] (T) at (v3) {\tiny{$x$}};

\end{scope}
\end{tikzpicture}
\end{figure}
\noindent is also disallowed, for the same reason (i.e., it extends as the initial root distribution). This concludes the proof of the proposition.
\end{proof}

\section{Parity prescription in the neighbourhood of a vertex} \label{S - parity one ball}

\begin{proposition}
Every parity distribution on a ball of radius 1 (hexagon) can be realized by an abstract root distribution. 
\end{proposition}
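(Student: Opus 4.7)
The plan is to translate the statement into a small system of equations modulo $2$ parametrized by the choice of root distribution, and solve it by a simple decoupling argument.

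Label the center of the hexagon $x$ and its six boundary vertices $v_1,\dots,v_6$ cyclically, so the six faces are $f_i=\{x,v_i,v_{i+1}\}$ for $i\in\ZI/6\ZI$. Identify the three simplicial directions of the Euclidean tessellation of $\IR^2$ with $\ZI/3\ZI$. For $v\in f$, let $d_v(f)\in\ZI/3\ZI$ denote the simplicial direction at $v$ parallel to the side of $f$ opposite $v$, which is also the direction of the side of $\tilde f$ through $v$. With an appropriate labeling one computes
\[
d_x(f_i)\equiv i+1,\qquad d_{v_i}(f_i)\equiv i,\qquad d_{v_{i+1}}(f_i)\equiv i-1\pmod 3.
\]
Writing $\delta(x)=t$, $\delta(v_i)=s_i\in\ZI/3\ZI$, and $m_v(f)=1$ if $d_v(f)=\delta(v)$ and $0$ otherwise, the definition of parity gives
\[
p(f_i)\equiv 1+m_x(f_i)+m_{v_i}(f_i)+m_{v_{i+1}}(f_i)\pmod 2, \qquad i\in\ZI/6\ZI.
\]

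Given a prescribed parity distribution $p$, fix $t\in\ZI/3\ZI$ arbitrarily, so $m_x(f_i)$ is determined for every $i$. Setting $Q_i:=p(f_i)+1+m_x(f_i)\pmod 2$, the system reduces to $m_{v_i}(f_i)+m_{v_{i+1}}(f_i)\equiv Q_i\pmod 2$. The key observation is that at each boundary vertex $v_j$ the value $s_j$ affects only the pair $(m_{v_j}(f_{j-1}),m_{v_j}(f_j))$, and enumerating the three possible values of $s_j$ shows this pair takes precisely the values $(0,0)$, $(0,1)$, $(1,0)$ (and never $(1,1)$). In particular, two of these three choices achieve $m_{v_j}(f_{j-1})=0$ and between them realize both possible values of $m_{v_j}(f_j)$. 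Accordingly, one sets the ``backward'' contribution $m_{v_{i+1}}(f_i)$ uniformly to $0$, whereupon the equations decouple to $m_{v_i}(f_i)=Q_i$, each of which is then solved by picking $s_i$ appropriately.

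This produces an abstract root distribution on the hexagon realizing $p$; extending it arbitrarily to all of $\IR^2$ and invoking Prop.~\ref{P - root distribution in a flat} yields the required realization in a Moebius--Kantor complex. The only potential obstacle would have been a cycle-consistency constraint around the six faces (since each $s_j$ influences two adjacent equations and cannot simultaneously force both to be $1$), but the uniform backward-zero choice shows that no such obstruction arises.
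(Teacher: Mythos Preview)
Your proof is correct and takes a genuinely different route from the paper. The paper proceeds by explicit enumeration: it groups the $2^6$ parity distributions (up to symmetry) into thirteen cases---five with at most two odd faces, five with at most two even faces, three with exactly three of each---and for each case exhibits an explicit root distribution by a picture. Your argument instead linearizes the problem: you encode the parity of each face as an equation over $\IZ/2\IZ$ in the indicator variables $m_v(f)$, observe that the three choices of $s_j$ at a boundary vertex realize exactly the pairs $(0,0),(0,1),(1,0)$ for $(m_{v_j}(f_{j-1}),m_{v_j}(f_j))$, and then kill the cyclic coupling by uniformly setting the backward contribution to zero. This reduces the $6$-cycle of equations to six independent one-variable equations, each trivially solvable.

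What each approach buys: the paper's enumeration is self-contained and visual, and as a by-product displays concrete root distributions that can be reused later. Your argument is shorter, avoids any case analysis, and makes transparent \emph{why} there is no obstruction on the hexagon: the only potential constraint would be a $\IZ/2\IZ$ cycle condition around the six faces, and your backward-zero trick shows it is vacuous. Your method also generalizes more readily (e.g., to other finite patches where the adjacency structure of boundary vertices is similarly tree-like after the decoupling), whereas the enumeration does not scale.
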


\begin{proof} Let $B$ be a ball of radius 1 around a vertex.  We distinguish three cases: either 
$B$ contains at most two odd faces, at most two even faces, or precisely three odd faces and three even faces.

The first two cases consists of five subcases each, which correspond to the number of ways to assign a parity to $B$ with at most two even (or odd) faces. In these cases, the root distributions can be chosen as follows:

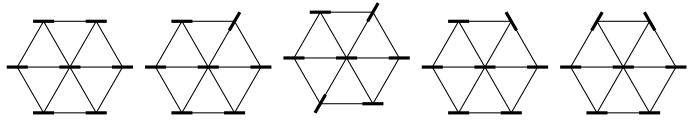
\begin{figure}[H]
\begin{tikzpicture}[scale=.7]
\coordinate (O) at (0.0,0.0);
\coordinate (v1) at (1.0,0.0);
\coordinate (v2) at (0.5,0.87);
\coordinate (v3) at (-0.5,0.87);
\coordinate (v4) at (-1.0,-0.0);
\coordinate (v5) at (-0.5,-0.87);
\coordinate (v6) at (0.5,-0.87);
\draw[solid,thin,color=black] (v1) -- (v2) -- (v3)  -- (v4) -- (v5) -- (v6) -- (v1);
\draw[solid,thin,color=black] (v1) -- (v4);
\draw[solid,thin,color=black] (v2) -- (v5);
\draw[solid,thin,color=black] (v3) -- (v6);

\draw[solid,very thick,rotate=0,shift=+(O)] (-0.2,0) -- (.2,0);
\draw[solid,very thick,rotate=0,shift=+(v1)] (-0.2,0) -- (.2,0);
\draw[solid,very thick,rotate=0,shift=+(v2)] (-0.2,0) -- (.2,0);
\draw[solid,very thick,rotate=0,shift=+(v3)] (-0.2,0) -- (.2,0);
\draw[solid,very thick,rotate=0,shift=+(v4)] (-0.2,0) -- (.2,0);
\draw[solid,very thick,rotate=0,shift=+(v5)] (-0.2,0) -- (.2,0);
\draw[solid,very thick,rotate=0,shift=+(v6)] (-0.2,0) -- (.2,0);
\end{tikzpicture}
\begin{tikzpicture}[scale=.7]
\coordinate (O) at (0.0,0.0);
\coordinate (v1) at (1.0,0.0);
\coordinate (v2) at (0.5,0.87);
\coordinate (v3) at (-0.5,0.87);
\coordinate (v4) at (-1.0,-0.0);
\coordinate (v5) at (-0.5,-0.87);
\coordinate (v6) at (0.5,-0.87);
\draw[solid,thin,color=black] (v1) -- (v2) -- (v3)  -- (v4) -- (v5) -- (v6) -- (v1);
\draw[solid,thin,color=black] (v1) -- (v4);
\draw[solid,thin,color=black] (v2) -- (v5);
\draw[solid,thin,color=black] (v3) -- (v6);

\draw[solid,very thick,rotate=0,shift=+(O)] (-0.2,0) -- (.2,0);
\draw[solid,very thick,rotate=0,shift=+(v1)] (-0.2,0) -- (.2,0);
\draw[solid,very thick,rotate=60,shift=+(v2)] (-0.2,0) -- (.2,0);
\draw[solid,very thick,rotate=0,shift=+(v3)] (-0.2,0) -- (.2,0);
\draw[solid,very thick,rotate=0,shift=+(v4)] (-0.2,0) -- (.2,0);
\draw[solid,very thick,rotate=0,shift=+(v5)] (-0.2,0) -- (.2,0);
\draw[solid,very thick,rotate=0,shift=+(v6)] (-0.2,0) -- (.2,0);
\end{tikzpicture}
\begin{tikzpicture}[scale=.7]
\coordinate (O) at (0.0,0.0);
\coordinate (v1) at (1.0,0.0);
\coordinate (v2) at (0.5,0.87);
\coordinate (v3) at (-0.5,0.87);
\coordinate (v4) at (-1.0,-0.0);
\coordinate (v5) at (-0.5,-0.87);
\coordinate (v6) at (0.5,-0.87);
\draw[solid,thin,color=black] (v1) -- (v2) -- (v3)  -- (v4) -- (v5) -- (v6) -- (v1);
\draw[solid,thin,color=black] (v1) -- (v4);
\draw[solid,thin,color=black] (v2) -- (v5);
\draw[solid,thin,color=black] (v3) -- (v6);

\draw[solid,very thick,rotate=0,shift=+(O)] (-0.2,0) -- (.2,0);
\draw[solid,very thick,rotate=0,shift=+(v1)] (-0.2,0) -- (.2,0);
\draw[solid,very thick,rotate=60,shift=+(v2)] (-0.2,0) -- (.2,0);
\draw[solid,very thick,rotate=0,shift=+(v3)] (-0.2,0) -- (.2,0);
\draw[solid,very thick,rotate=0,shift=+(v4)] (-0.2,0) -- (.2,0);
\draw[solid,very thick,rotate=60,shift=+(v5)] (-0.2,0) -- (.2,0);
\draw[solid,very thick,rotate=0,shift=+(v6)] (-0.2,0) -- (.2,0);
\end{tikzpicture}
\begin{tikzpicture}[scale=.7]
\coordinate (O) at (0.0,0.0);
\coordinate (v1) at (1.0,0.0);
\coordinate (v2) at (0.5,0.87);
\coordinate (v3) at (-0.5,0.87);
\coordinate (v4) at (-1.0,-0.0);
\coordinate (v5) at (-0.5,-0.87);
\coordinate (v6) at (0.5,-0.87);
\draw[solid,thin,color=black] (v1) -- (v2) -- (v3)  -- (v4) -- (v5) -- (v6) -- (v1);
\draw[solid,thin,color=black] (v1) -- (v4);
\draw[solid,thin,color=black] (v2) -- (v5);
\draw[solid,thin,color=black] (v3) -- (v6);

\draw[solid,very thick,rotate=0,shift=+(O)] (-0.2,0) -- (.2,0);
\draw[solid,very thick,rotate=0,shift=+(v1)] (-0.2,0) -- (.2,0);
\draw[solid,very thick,rotate=120,shift=+(v2)] (-0.2,0) -- (.2,0);
\draw[solid,very thick,rotate=0,shift=+(v3)] (-0.2,0) -- (.2,0);
\draw[solid,very thick,rotate=0,shift=+(v4)] (-0.2,0) -- (.2,0);
\draw[solid,very thick,rotate=0,shift=+(v5)] (-0.2,0) -- (.2,0);
\draw[solid,very thick,rotate=0,shift=+(v6)] (-0.2,0) -- (.2,0);
\end{tikzpicture}
\begin{tikzpicture}[scale=.7]
\coordinate (O) at (0.0,0.0);
\coordinate (v1) at (1.0,0.0);
\coordinate (v2) at (0.5,0.87);
\coordinate (v3) at (-0.5,0.87);
\coordinate (v4) at (-1.0,-0.0);
\coordinate (v5) at (-0.5,-0.87);
\coordinate (v6) at (0.5,-0.87);
\draw[solid,thin,color=black] (v1) -- (v2) -- (v3)  -- (v4) -- (v5) -- (v6) -- (v1);
\draw[solid,thin,color=black] (v1) -- (v4);
\draw[solid,thin,color=black] (v2) -- (v5);
\draw[solid,thin,color=black] (v3) -- (v6);

\draw[solid,very thick,rotate=0,shift=+(O)] (-0.2,0) -- (.2,0);
\draw[solid,very thick,rotate=0,shift=+(v1)] (-0.2,0) -- (.2,0);
\draw[solid,very thick,rotate=120,shift=+(v2)] (-0.2,0) -- (.2,0);
\draw[solid,very thick,rotate=60,shift=+(v3)] (-0.2,0) -- (.2,0);
\draw[solid,very thick,rotate=0,shift=+(v4)] (-0.2,0) -- (.2,0);
\draw[solid,very thick,rotate=0,shift=+(v5)] (-0.2,0) -- (.2,0);
\draw[solid,very thick,rotate=0,shift=+(v6)] (-0.2,0) -- (.2,0);
\end{tikzpicture}
\caption{Root distributions with at most 2 odd faces}
\end{figure}

It is easily seen that the above root distributions exhaust the parity distributions with at most two odd faces. Similarly:

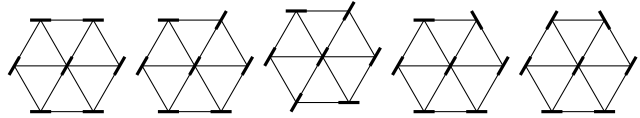
\begin{figure}[H]
\begin{tikzpicture}[scale=.7]
\coordinate (O) at (0.0,0.0);
\coordinate (v1) at (1.0,0.0);
\coordinate (v2) at (0.5,0.87);
\coordinate (v3) at (-0.5,0.87);
\coordinate (v4) at (-1.0,-0.0);
\coordinate (v5) at (-0.5,-0.87);
\coordinate (v6) at (0.5,-0.87);
\draw[solid,thin,color=black] (v1) -- (v2) -- (v3)  -- (v4) -- (v5) -- (v6) -- (v1);
\draw[solid,thin,color=black] (v1) -- (v4);
\draw[solid,thin,color=black] (v2) -- (v5);
\draw[solid,thin,color=black] (v3) -- (v6);

\draw[solid,very thick,rotate=60,shift=+(O)] (-0.2,0) -- (.2,0);
\draw[solid,very thick,rotate=60,shift=+(v1)] (-0.2,0) -- (.2,0);
\draw[solid,very thick,rotate=0,shift=+(v2)] (-0.2,0) -- (.2,0);
\draw[solid,very thick,rotate=0,shift=+(v3)] (-0.2,0) -- (.2,0);
\draw[solid,very thick,rotate=60,shift=+(v4)] (-0.2,0) -- (.2,0);
\draw[solid,very thick,rotate=0,shift=+(v5)] (-0.2,0) -- (.2,0);
\draw[solid,very thick,rotate=0,shift=+(v6)] (-0.2,0) -- (.2,0);
\end{tikzpicture}
\begin{tikzpicture}[scale=.7]
\coordinate (O) at (0.0,0.0);
\coordinate (v1) at (1.0,0.0);
\coordinate (v2) at (0.5,0.87);
\coordinate (v3) at (-0.5,0.87);
\coordinate (v4) at (-1.0,-0.0);
\coordinate (v5) at (-0.5,-0.87);
\coordinate (v6) at (0.5,-0.87);
\draw[solid,thin,color=black] (v1) -- (v2) -- (v3)  -- (v4) -- (v5) -- (v6) -- (v1);
\draw[solid,thin,color=black] (v1) -- (v4);
\draw[solid,thin,color=black] (v2) -- (v5);
\draw[solid,thin,color=black] (v3) -- (v6);

\draw[solid,very thick,rotate=60,shift=+(O)] (-0.2,0) -- (.2,0);
\draw[solid,very thick,rotate=60,shift=+(v1)] (-0.2,0) -- (.2,0);
\draw[solid,very thick,rotate=60,shift=+(v2)] (-0.2,0) -- (.2,0);
\draw[solid,very thick,rotate=0,shift=+(v3)] (-0.2,0) -- (.2,0);
\draw[solid,very thick,rotate=60,shift=+(v4)] (-0.2,0) -- (.2,0);
\draw[solid,very thick,rotate=0,shift=+(v5)] (-0.2,0) -- (.2,0);
\draw[solid,very thick,rotate=0,shift=+(v6)] (-0.2,0) -- (.2,0);
\end{tikzpicture}
\begin{tikzpicture}[scale=.7]
\coordinate (O) at (0.0,0.0);
\coordinate (v1) at (1.0,0.0);
\coordinate (v2) at (0.5,0.87);
\coordinate (v3) at (-0.5,0.87);
\coordinate (v4) at (-1.0,-0.0);
\coordinate (v5) at (-0.5,-0.87);
\coordinate (v6) at (0.5,-0.87);
\draw[solid,thin,color=black] (v1) -- (v2) -- (v3)  -- (v4) -- (v5) -- (v6) -- (v1);
\draw[solid,thin,color=black] (v1) -- (v4);
\draw[solid,thin,color=black] (v2) -- (v5);
\draw[solid,thin,color=black] (v3) -- (v6);

\draw[solid,very thick,rotate=60,shift=+(O)] (-0.2,0) -- (.2,0);
\draw[solid,very thick,rotate=60,shift=+(v1)] (-0.2,0) -- (.2,0);
\draw[solid,very thick,rotate=60,shift=+(v2)] (-0.2,0) -- (.2,0);
\draw[solid,very thick,rotate=0,shift=+(v3)] (-0.2,0) -- (.2,0);
\draw[solid,very thick,rotate=60,shift=+(v4)] (-0.2,0) -- (.2,0);
\draw[solid,very thick,rotate=60,shift=+(v5)] (-0.2,0) -- (.2,0);
\draw[solid,very thick,rotate=0,shift=+(v6)] (-0.2,0) -- (.2,0);
\end{tikzpicture}
\begin{tikzpicture}[scale=.7]
\coordinate (O) at (0.0,0.0);
\coordinate (v1) at (1.0,0.0);
\coordinate (v2) at (0.5,0.87);
\coordinate (v3) at (-0.5,0.87);
\coordinate (v4) at (-1.0,-0.0);
\coordinate (v5) at (-0.5,-0.87);
\coordinate (v6) at (0.5,-0.87);
\draw[solid,thin,color=black] (v1) -- (v2) -- (v3)  -- (v4) -- (v5) -- (v6) -- (v1);
\draw[solid,thin,color=black] (v1) -- (v4);
\draw[solid,thin,color=black] (v2) -- (v5);
\draw[solid,thin,color=black] (v3) -- (v6);

\draw[solid,very thick,rotate=60,shift=+(O)] (-0.2,0) -- (.2,0);
\draw[solid,very thick,rotate=60,shift=+(v1)] (-0.2,0) -- (.2,0);
\draw[solid,very thick,rotate=120,shift=+(v2)] (-0.2,0) -- (.2,0);
\draw[solid,very thick,rotate=0,shift=+(v3)] (-0.2,0) -- (.2,0);
\draw[solid,very thick,rotate=60,shift=+(v4)] (-0.2,0) -- (.2,0);
\draw[solid,very thick,rotate=0,shift=+(v5)] (-0.2,0) -- (.2,0);
\draw[solid,very thick,rotate=0,shift=+(v6)] (-0.2,0) -- (.2,0);
\end{tikzpicture}
\begin{tikzpicture}[scale=.7]
\coordinate (O) at (0.0,0.0);
\coordinate (v1) at (1.0,0.0);
\coordinate (v2) at (0.5,0.87);
\coordinate (v3) at (-0.5,0.87);
\coordinate (v4) at (-1.0,-0.0);
\coordinate (v5) at (-0.5,-0.87);
\coordinate (v6) at (0.5,-0.87);
\draw[solid,thin,color=black] (v1) -- (v2) -- (v3)  -- (v4) -- (v5) -- (v6) -- (v1);
\draw[solid,thin,color=black] (v1) -- (v4);
\draw[solid,thin,color=black] (v2) -- (v5);
\draw[solid,thin,color=black] (v3) -- (v6);

\draw[solid,very thick,rotate=60,shift=+(O)] (-0.2,0) -- (.2,0);
\draw[solid,very thick,rotate=60,shift=+(v1)] (-0.2,0) -- (.2,0);
\draw[solid,very thick,rotate=120,shift=+(v2)] (-0.2,0) -- (.2,0);
\draw[solid,very thick,rotate=60,shift=+(v3)] (-0.2,0) -- (.2,0);
\draw[solid,very thick,rotate=60,shift=+(v4)] (-0.2,0) -- (.2,0);
\draw[solid,very thick,rotate=0,shift=+(v5)] (-0.2,0) -- (.2,0);
\draw[solid,very thick,rotate=0,shift=+(v6)] (-0.2,0) -- (.2,0);
\end{tikzpicture}
\caption{Root distributions with at most 2 even faces}
\end{figure}

The last case, with 3 even faces and 3 odd faces, consists of 3 subcases, and for each of these cases, a root distribution exists: 
\begin{figure}[H]
\begin{tikzpicture}[scale=.7]
\coordinate (O) at (0.0,0.0);
\coordinate (v1) at (1.0,0.0);
\coordinate (v2) at (0.5,0.87);
\coordinate (v3) at (-0.5,0.87);
\coordinate (v4) at (-1.0,-0.0);
\coordinate (v5) at (-0.5,-0.87);
\coordinate (v6) at (0.5,-0.87);
\draw[solid,thin,color=black] (v1) -- (v2) -- (v3)  -- (v4) -- (v5) -- (v6) -- (v1);
\draw[solid,thin,color=black] (v1) -- (v4);
\draw[solid,thin,color=black] (v2) -- (v5);
\draw[solid,thin,color=black] (v3) -- (v6);

\draw[solid,very thick,rotate=0,shift=+(O)] (-0.2,0) -- (.2,0);
\draw[solid,very thick,rotate=0,shift=+(v1)] (-0.2,0) -- (.2,0);
\draw[solid,very thick,rotate=120,shift=+(v2)] (-0.2,0) -- (.2,0);
\draw[solid,very thick,rotate=60,shift=+(v3)] (-0.2,0) -- (.2,0);
\draw[solid,very thick,rotate=0,shift=+(v4)] (-0.2,0) -- (.2,0);
\draw[solid,very thick,rotate=0,shift=+(v5)] (-0.2,0) -- (.2,0);
\draw[solid,very thick,rotate=60,shift=+(v6)] (-0.2,0) -- (.2,0);
\end{tikzpicture}
\begin{tikzpicture}[scale=.7]
\coordinate (O) at (0.0,0.0);
\coordinate (v1) at (1.0,0.0);
\coordinate (v2) at (0.5,0.87);
\coordinate (v3) at (-0.5,0.87);
\coordinate (v4) at (-1.0,-0.0);
\coordinate (v5) at (-0.5,-0.87);
\coordinate (v6) at (0.5,-0.87);
\draw[solid,thin,color=black] (v1) -- (v2) -- (v3)  -- (v4) -- (v5) -- (v6) -- (v1);
\draw[solid,thin,color=black] (v1) -- (v4);
\draw[solid,thin,color=black] (v2) -- (v5);
\draw[solid,thin,color=black] (v3) -- (v6);

\draw[solid,very thick,rotate=0,shift=+(O)] (-0.2,0) -- (.2,0);
\draw[solid,very thick,rotate=0,shift=+(v1)] (-0.2,0) -- (.2,0);
\draw[solid,very thick,rotate=120,shift=+(v2)] (-0.2,0) -- (.2,0);
\draw[solid,very thick,rotate=0,shift=+(v3)] (-0.2,0) -- (.2,0);
\draw[solid,very thick,rotate=0,shift=+(v4)] (-0.2,0) -- (.2,0);
\draw[solid,very thick,rotate=60,shift=+(v5)] (-0.2,0) -- (.2,0);
\draw[solid,very thick,rotate=0,shift=+(v6)] (-0.2,0) -- (.2,0);
\end{tikzpicture}
\begin{tikzpicture}[scale=.7]
\coordinate (O) at (0.0,0.0);
\coordinate (v1) at (1.0,0.0);
\coordinate (v2) at (0.5,0.87);
\coordinate (v3) at (-0.5,0.87);
\coordinate (v4) at (-1.0,-0.0);
\coordinate (v5) at (-0.5,-0.87);
\coordinate (v6) at (0.5,-0.87);
\draw[solid,thin,color=black] (v1) -- (v2) -- (v3)  -- (v4) -- (v5) -- (v6) -- (v1);
\draw[solid,thin,color=black] (v1) -- (v4);
\draw[solid,thin,color=black] (v2) -- (v5);
\draw[solid,thin,color=black] (v3) -- (v6);

\draw[solid,very thick,rotate=0,shift=+(O)] (-0.2,0) -- (.2,0);
\draw[solid,very thick,rotate=0,shift=+(v1)] (-0.2,0) -- (.2,0);
\draw[solid,very thick,rotate=120,shift=+(v2)] (-0.2,0) -- (.2,0);
\draw[solid,very thick,rotate=120,shift=+(v3)] (-0.2,0) -- (.2,0);
\draw[solid,very thick,rotate=0,shift=+(v4)] (-0.2,0) -- (.2,0);
\draw[solid,very thick,rotate=0,shift=+(v5)] (-0.2,0) -- (.2,0);
\draw[solid,very thick,rotate=0,shift=+(v6)] (-0.2,0) -- (.2,0);
\end{tikzpicture}
\caption{Root distributions with exactly three odd faces}
\end{figure}
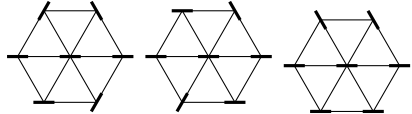
\noindent This concludes the proof of the proposition. \end{proof}

\section{The Pauli complex $\Delta_P$}\label{Pauli}

The Pauli complex is a simply connected Moebius--Kantor complex $\Delta_P$ obtained by a standard construction \cite[Chap.\ 3]{BH} using a triangle of groups  associated with the group $P$ generated by the three Pauli matrices in $\SU(2)$:
\[
X := \begin{pmatrix}0&1\\1&0\end{pmatrix},\ \   Y := \begin{pmatrix}0&-i\\i&0\end{pmatrix},\ \   \text{ and } \ \ Z := \begin{pmatrix}1&0\\0&-1\end{pmatrix}.
\]
The direct limit $G_P$ of this triangle of groups acts on $X_P$ in a flag transitive way. The complex $\Delta_P$ is obtained by developing around a fundamental face $f$ using the group $P$. 

The complex $\Delta_P$ has an important property, which was  used in \cite{Pauli} to classify the flat planes that it contains: every face in  $\Delta_P$ can be assigned with a Pauli matrix $X$, $Y$ or $Z$ in an equivariant way \cite[\ts 3]{Pauli}.  Under this assignment, the link edges are naturally labeled, and the link is a Moebius--Kantor graph  isomorphic to the Cayley graph of $P$ with respect to $\{X,Y,Z\}$. The 6-cycles in the link (which correspond to rings in the ring puzzles) are given by the following relations between the Pauli matrices
\begin{enumerate}
\item
[] $XYXZYZ=\Id$
\item
[] $YZYXZX=\Id$
\item
[] $ZXZYXY=\Id$
\end{enumerate}
in $P$. A root in $\Delta_P$ corresponds to a path of length 3 in the link  with an edge labelling in $\{X,Y,Z\}$.   

\begin{lemma}\label{L - rank 2 roots}
A root is of rank 2 if and only if it contains the three labels $X$, $Y$, $Z$.
\end{lemma}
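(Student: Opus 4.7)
The plan is to reduce the statement to a direct computation in the group $P$. By definition, a root at $x$ is an isometric embedding $[0,\pi]\inj L_x$ starting at a link vertex of order $>2$. Since $L_x$ is the Moebius--Kantor graph (cubic, of girth $6$), such a root is the same datum as a non-backtracking edge path of length $3$, equivalently a reduced word $s_1s_2s_3$ with $s_i\in\{X,Y,Z\}$ and $s_i\ne s_{i+1}$, emanating from some $g\in P$ in the Cayley graph. Every link vertex having order $3$, one has $q_\alpha=2$ for every root, so the condition $\rk(\alpha)=2$ reduces to $N(\alpha)=2$, i.e.\ to the existence of precisely two other length-$3$ reduced paths from $g$ sharing the same endpoint as $\alpha$.

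The twelve reduced length-$3$ words fall into two classes: the six permutations of $XYZ$ (using all three labels) and the six ``palindromes'' $aba$ with $a\ne b$ (using only two labels). The strategy is to show that each palindrome pairs with exactly one other palindrome to give the same endpoint, whereas each permutation groups with exactly two other permutations.

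For the palindromes this is immediate from the $6$-cycle relations listed above. Since $X,Y,Z$ are involutions, the identity $XYXZYZ=\Id$ rewrites as $XYX=ZYZ$, and analogously $YZY=XZX$ and $ZXZ=YXY$. These pair up the six palindromes into three pairs ending at $-gY$, $-gZ$ and $-gX$ respectively, which are distinct elements of $P$; hence every palindrome root satisfies $N(\alpha)=1$ and has rank $3/2$.

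For the permutations one computes directly in $P$, using $X^2=Y^2=Z^2=\Id$ together with $XY=iZ$, $YZ=iX$ and $ZX=iY$, that
\[
XYZ=YZX=ZXY=i\,\Id,\qquad XZY=YXZ=ZYX=-i\,\Id.
\]
Thus the six permutations split into two triples, each triple sharing the common endpoint $ig$ or $-ig$; so every such root satisfies $N(\alpha)=2$ and has rank $2$. Combining the two cases yields the lemma. The only real bookkeeping point is to verify that the five possible endpoints $\pm ig$, $-gX$, $-gY$, $-gZ$ are genuinely pairwise distinct in $P$ (so that the counts $N(\alpha)$ really are $2$ and $1$ without accidental further identifications); this is immediate from the description of $P$ as a group of order $16$ with centre $\{\pm\Id,\pm i\,\Id\}$.
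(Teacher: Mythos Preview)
Your proof is correct. The paper's argument is the same in spirit—both reduce to a computation in the Pauli group—but it is organized differently: the paper first invokes the transitivity of the Moebius--Kantor graph on paths of length~$2$ to reduce to just two representative roots, one with labels $XYZ$ and one with labels $XYX$, and then counts the number of $6$-cycles extending each (two for $XYZ$, via $XYZXZY$ and $XYZYXZ$; one for $XYX$, via $XYXZYZ$). Counting $6$-cycle extensions is equivalent to your endpoint count, since a $6$-cycle through a length-$3$ geodesic is precisely the datum of a second length-$3$ geodesic sharing its endpoints. Your route has the merit of being completely explicit and of verifying directly that the five endpoints $\pm ig,\,-gX,\,-gY,\,-gZ$ are distinct (so that no accidental identifications inflate $N(\alpha)$), a point the paper leaves implicit; the paper's route is shorter but leans on the graph symmetry to avoid the full enumeration.
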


\begin{proof}
Since the Moebius--Kantor graph is transitive on path of length 2, it is enough to prove the lemma for the roots having labels $XYZ$ and $XYX$. It is clear that a root with labels $XYZ$ can be extended in two ways using the 6-cycles $XYZXZY$ and $XYZYXZ$, and therefore is of rank 2. A root with labels $XYX$ can be extended in a single way using a 6-cycle, namely, $XYXZYZ$. The word $XYXY$ is not a subword of a relation of length 6 for the group $P$.
\end{proof}

This implies:

\begin{proposition}
The Pauli complex $\Delta_P$ is an even Moebius--Kantor complex.
\end{proposition}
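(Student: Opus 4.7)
My plan is to deduce the proposition directly from Lemma \ref{L - rank 2 roots} together with a simple mod-$2$ counting identity. Fix a face $f$ in $\Delta_P$ with vertices $x_1, x_2, x_3$, and a large triangle $\tilde f$ containing $f$, formed by $f$ together with three outer faces $f_1, f_2, f_3$, where $f_i$ shares with $f$ the edge opposite $x_i$. Let $L_f, L_1, L_2, L_3 \in \{X, Y, Z\}$ denote their Pauli labels. The aim is to show that the number of indices $i \in \{1,2,3\}$ for which the root $\alpha_{x_i}$ associated with $\tilde f$ is of rank $2$ is even.

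The first step is to observe that two faces of $\Delta_P$ sharing an edge always carry distinct Pauli labels. Indeed, the link at any vertex is isomorphic (as a labelled graph) to the Cayley graph of $P$ with respect to $\{X,Y,Z\}$, and in this Cayley graph each link-vertex is incident to exactly one link-edge of each label. Since the faces through a common edge $e$ at a common vertex $x$ correspond to the link-edges incident to the link-vertex $[e]$, their labels must be pairwise distinct. In particular $L_i \neq L_f$ for $i = 1, 2, 3$, so each $L_i$ lies in $\{A, B\} := \{X,Y,Z\} \setminus \{L_f\}$.

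The second step is to read off the three roots. At $x_i$, the root $\alpha_{x_i}$ is the length-$3$ path in $L_{x_i}$ whose three consecutive edge-labels are $L_j, L_f, L_k$, where $\{j, k\} = \{1,2,3\} \setminus \{i\}$. By Lemma \ref{L - rank 2 roots}, $\alpha_{x_i}$ has rank $2$ if and only if these three labels exhaust $\{X,Y,Z\}$; since $L_f$ is already present and $L_j, L_k \in \{A, B\}$, this happens precisely when $L_j \neq L_k$.

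The final step is a one-line computation modulo $2$. Writing $a_i = 0$ when $L_i = A$ and $a_i = 1$ when $L_i = B$, the condition $L_j \neq L_k$ becomes $a_j + a_k \equiv 1 \pmod{2}$, and summing the three indicators over the vertices of $f$ yields
\[
(a_2 + a_3) + (a_1 + a_3) + (a_1 + a_2) = 2(a_1 + a_2 + a_3) \equiv 0 \pmod{2}.
\]
Thus $f$ is even, and since $f$ was arbitrary, $\Delta_P$ is even. There is no serious obstacle here: the proposition reduces entirely to Lemma \ref{L - rank 2 roots} and the fact that adjacent faces carry distinct Pauli labels, both of which follow from the Cayley-graph description of the link.
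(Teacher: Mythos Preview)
Your proof is correct, and it takes a genuinely different route from the paper's. The paper invokes the flag transitivity of $G_P$ on $\Delta_P$ to reduce to a single face, then makes one specific choice of large triangle $\tilde f$ (all three outer faces with the same label $X$), and observes that the three resulting roots all have label pattern $XYX$, hence none are of rank $2$ by Lemma~\ref{L - rank 2 roots}. This relies both on flag transitivity and on the well-definedness of parity (independence of the choice of $\tilde f$, quoted from \cite{parity}).

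Your argument instead treats an arbitrary face and an arbitrary large triangle, and replaces the specific computation with the mod-$2$ identity $\sum_i(a_j+a_k)=2(a_1+a_2+a_3)\equiv 0$. This buys you two things: you do not need to appeal to flag transitivity, and you do not need to cite the result that parity is independent of the choice of $\tilde f$---in fact your computation \emph{re-proves} that independence for $\Delta_P$ as a by-product. The cost is the short extra step of justifying that adjacent faces carry distinct Pauli labels, which you handle correctly via the Cayley-graph description of the link. Both proofs ultimately rest on Lemma~\ref{L - rank 2 roots}; the paper's is shorter given its prerequisites, while yours is more self-contained.
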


\begin{proof}
Since $\Delta_P$ is flag transitive, it is enough to prove that the fundamental face is even. We assume that the face $f$ is labelled with the matrix $Y$, and extend $f$ into an equilateral triangle $\tilde f$ using three faces with the same labels, say $X$. Then the three roots associated with $f$ have labels $XYX$. By the previous lemma, these roots are not of rank 2. Therefore, $\Delta_P$ is even.     
\end{proof}

As shown in the introduction, in order to deduce Theorem \ref{T - even root distribution classified} from \cite{Pauli}, it is sufficient to prove the following result.

\begin{proposition}\label{P - even is realized in Pauli}
Every even root distribution can be realized in the Pauli complex. 
\end{proposition}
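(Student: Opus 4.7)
My plan is to combine the direct classification of even root distributions (Theorem \ref{T - even root distribution classified}, established independently in \ts\ref{S - classifying even roots}) with the explicit description of Pauli flats via ring puzzles in \cite[\ts 5]{Pauli}. The direct classification shows that every even root distribution is isomorphic to either (M) the constant parallel distribution in which $\delta$ points along a single fixed simplicial direction, or (T) one of an infinite family of T-type distributions. It therefore suffices to exhibit, for each isomorphism class, a flat in $\Delta_P$ realizing it.

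The local picture guiding the construction comes from Lemma \ref{L - rank 2 roots}. In a flat of $\Delta_P$, the rank-1 simplicial direction at a vertex $x$ is perpendicular to the unique pair of opposite faces at $x$ whose Pauli labels agree; call this common matrix the \emph{axis label} $M_x$ at $x$. The remaining four faces around $x$ are then labelled by the other two Pauli matrices in the pattern forced by exactly one of the three Pauli relations $XYXZYZ$, $YZYXZX$, $ZXZYXY$. Thus, once $\delta$ is prescribed, choosing $M_x\in\{X,Y,Z\}$ at each vertex determines the labelling at $x$ up to a single binary reflection.

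For the parallel case (M), I will give the labelling explicitly. Orient the flat so that $\delta$ points in a fixed simplicial direction and index the rows of vertices transverse to $\delta$ by $n\in\IZ$. Setting the axis label of row $n$ to $Y$, $X$, or $Z$ according to $n\pmod 3$, propagating along each row by translation, and inheriting the off-axis labels from the relevant Pauli relation, one checks that the two triangles in the strip between row $n$ and row $n+1$ receive the labels $M_{n+1}$ (up-triangles) and $M_n$ (down-triangles), and that the 6-cycle around any vertex equals one of the three Pauli relators. This striped labelling is the desired flat in $\Delta_P$.

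For the T-type distributions (T), I will invoke the T-puzzles constructed in \cite[\ts 5]{Pauli}. Each T-puzzle already furnishes a Pauli labelling of $\IR^2$ compatible with the Pauli ring relations and therefore defines a flat in $\Delta_P$. What remains is to match the combinatorial parameters classifying the T-type even root distributions (output of \ts\ref{S - classifying even roots}) with those classifying the T-puzzles, and to check that the induced root distribution, computed by the axis-label rule of the previous paragraph, agrees with the target. I expect this matching step to be the main obstacle: both sides are described by periodic combinatorial data, and the verification reduces to a careful but essentially mechanical comparison of the two parametrizations, together with the observation that the involution interchanging the two binary reflections at each vertex corresponds to the symmetry already built into the T-puzzle classification.
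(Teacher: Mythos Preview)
Your strategy---first classify even root distributions via \ts\ref{S - classifying even roots}, then exhibit a Pauli flat for each isomorphism class---is different from the paper's and, in outline, legitimate. The paper instead proves the sharper Proposition~\ref{P - even unique Pauli puzzle}: given \emph{any} even root distribution, a choice of labels $X,Y$ on two adjacent faces extends uniquely to a Pauli puzzle on all of $\IR^2$. The induction runs over balls $B_n$; the only point to check is that for a boundary edge $[x,y]$ the outer face receives the same label whether one propagates from the link at $x$ or from the link at $y$, and this is forced by evenness of the inner face together with Lemma~\ref{L - rank 2 roots}. One then invokes \cite[Theorem~3.3]{Pauli} to realize the resulting puzzle in $\Delta_P$. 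This argument is uniform, needs no case analysis, and is logically prior to the classification (indeed, the paper deduces the classification \emph{from} this proposition rather than the other way around).

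Your proposal, however, contains a concrete misidentification of the classes. The unique $M$-type distribution in the classification is the flat $\Pi_t$ of \ts\ref{S - classifying even roots}: it carries rank-2 geodesics in all three simplicial directions and an $S_3$ symmetry about a central triangle (see the figure and the Remark following Lemma~\ref{L - S}). It is \emph{not} the constant parallel distribution. The constant parallel distribution is a union of height-1 rank-2 strips and therefore lies in the $T$-family. Consequently your explicit striped labelling in case (M) realizes one particular $T$-type, while $\Pi_t$ is covered by neither of your two cases---and it cannot be absorbed into the $T$-matching, since in \cite{Pauli} it corresponds to an $M$-puzzle, not a $T$-puzzle. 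Until you correctly identify $\Pi_t$ and supply a Pauli labelling for it, the argument is incomplete; once you do, you will essentially be re-deriving the puzzle classification of \cite{Pauli}, which is why the paper's direct inductive construction is the more economical route.
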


One argument would be to modify the proof of Prop.\ \ref{P - root distribution in a flat}, to ensure that the Moebius--Kantor complex is even, and apply Theorem \ref{T - unique even}.

We shall proceed as follows.

\begin{proposition}\label{P - even unique Pauli puzzle}
 Let $\Pi$ be a flat plane endowed with an even root distribution, and consider a labelling by $X$ and $Y$, of two consecutive faces in $\Pi$. There exists a unique labelling of the faces of $\Pi$ with labels in $\{X,Y,Z\}$ such that  the 6-cycles in the links correspond to the relations $XYXZYZ=\Id$, $YZYXZX=\Id$, and $ZXZYXY=\Id$.
In other words, there exists a unique Pauli puzzle associated with $\Pi$ and the labelling of two consecutive faces. 
\end{proposition}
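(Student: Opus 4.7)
The plan is to combine a local uniqueness statement at each vertex of $\Pi$ with an outward propagation starting from the two prescribed labels; the main subtlety is the global consistency of this propagation, which is where the evenness of the root distribution enters.

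First I would prove the following \emph{local uniqueness} at a vertex. Let $u\in\Pi$; the six faces of $\Pi$ at $u$ give rise to a $6$-cycle $C_u$ in the link $L_u$, whose edges carry the Pauli labels of these faces. Any labelling satisfying the Pauli relations at $u$ realizes $C_u$, read cyclically, as one of $XYXZYZ$, $YZYXZX$, $ZXZYXY$. By Lemma~\ref{L - rank 2 roots}, the unique rank-$1$ simplicial direction at $u$---namely $\delta(u)$---corresponds to the $3$-edge arc of $C_u$ using at most two distinct labels, while the two rank-$2$ directions correspond to the arcs using all three labels. Since each vertex of the Moebius--Kantor graph is incident to exactly one edge of each label $X, Y, Z$, consecutive edges of $C_u$ necessarily have distinct labels. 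A short case analysis then shows: given two consecutive labels (which we may take to be $X$ and $Y$) and the direction $\delta(u)$, the remaining four labels are forced uniquely, and the resulting cyclic word is a rotation of one of the three Pauli relations. For instance, if $\delta(u)$ is the direction whose rank-$1$ arc contains the two prescribed edges, the rank-$1$ constraint forces the third label of that arc to be $X$, yielding $XYXZYZ$; the two other choices of $\delta(u)$ yield cyclic rotations of $YZYXZX$ and $ZXZYXY$ respectively.

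Next I would \emph{propagate the labelling}. The two initial faces share an edge with two common endpoints, and at each endpoint they provide two consecutive labels of the local $6$-cycle; by local uniqueness this determines all six labels there. Inductively, two faces sharing an edge incident to a vertex $u$ always correspond to two consecutive edges of $C_u$, so whenever two adjacent labelled faces meet at $u$, local uniqueness determines the four remaining labels around $u$. Since $\Pi$ is connected, this process extends the labelling to every face, and uniqueness is immediate from the local statement.

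The main obstacle is \emph{global consistency}: when the labelling reaches a given face via two different propagation paths, the two derived labels must agree. Since $\IR^2$ is simply connected, this reduces to a local compatibility check around each face of $\Pi$, and it is here that the evenness of $\delta$ enters crucially. Indeed, the parity of a face $f$ measures modulo $2$ the number of vertices of $f$ at which the arc associated to the large triangle $\tilde f$ uses all three labels $X, Y, Z$, so parity $0$ at every face is precisely the obstruction-free condition under which the local Pauli labellings at adjacent vertices glue into a globally consistent assignment. Carrying out this verification explicitly, by analysing the three Pauli relations in the light of Lemma~\ref{L - rank 2 roots}, completes the existence argument and hence the proof.
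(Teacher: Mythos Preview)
Your approach matches the paper's: local uniqueness at each vertex (two consecutive labels together with $\delta(u)$ force the whole Pauli $6$-cycle), then propagation, with evenness supplying global consistency. One correction: the distinguished direction $\delta(u)$ has rank $\tfrac32$, not rank $1$; in a Moebius--Kantor link, root ranks lie in $\{\tfrac32,2\}$. The paper organises the final step as an induction on balls $B_n$ rather than through simple connectedness: for each boundary edge $[x,y]$ of $B_n$ one checks that the local labellings at $x$ and at $y$ agree on the outer face $t'$, and this holds because the inner face $t$ already has two labelled neighbours in $B_n$, so the given root ranks at the vertices of $t$ (via Lemma~\ref{L - rank 2 roots}) determine the label of $t'$ uniquely, independently of whether one reads it off at $x$ or at $y$. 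That computation is exactly the ``explicit verification'' you defer; your face-loop consistency check and the paper's boundary-edge check are equivalent formulations of the same fact.
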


Prop.\ \ref{P - even is realized in Pauli} follows from Prop.\ \ref{P - even unique Pauli puzzle} and the fact that every Pauli puzzle is realized in the Pauli complex (Theorem 3.3 in \cite{Pauli}).

\begin{proof}
Uniqueness is clear. We prove existence by induction on the radius $n$ of the ball $B_n$ centered at one of the two common vertices of the labelled consecutive triangles. 

Suppose that there exists a labelling of $B_n$. For every vertex $x$ in the boundary of $B_n$, at least two faces of the ball $B_1(x)$ of radius 1 at $x$ are labelled, and therefore there exists a unique labelling of $B_1(x)$ which is consistent with  the parity at $x$.

Let $[x,y]$ be an edge in the boundary of $B_n$, and let $t$ and $t'$ denotes the two triangles adjacent to $[x,y]$, where $t$ is the triangle included in $B_n$.

 We have to show that the labelling of $t'$ from $B_1(x)$  coincides with the labelling from $B_1(y)$. However, since two of the three faces adjacent to $t$ are included in $B_n$ and therefore labelled, only one of the two possible labels of $t'$ ensure, by Lemma \ref{L - rank 2 roots}, that $t$ is an even face (compare Lemma \ref{L - label well defined}).   
\end{proof}

\section{Every even complex is isomorphic to $\Delta_P$}\label{S - uniqueness}

Let $\Delta$ be a simply connected even complex, $x_0\in \Delta$ be a vertex, $B_n$ be the ball of center $x_0$ and radius $n$ in $\Delta$, and $S_n:=\del B_n$ be the sphere of center $x_0$ and 
radius $n$. We fix an isometric embedding
\[
\p_1\colon B_1\to \Delta_P.
\]
Using $\p_1$, we may label every face in $B_1$ with one of the Pauli matrices $X,Y,Z$.

\begin{theorem}
There exists a unique isometric embedding $\p\colon \Delta\to \Delta_P$ such that ${\p}_{|B_1}=\p_1$.
\end{theorem}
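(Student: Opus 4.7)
The plan is to construct $\p$ as the direct limit of a unique sequence of isometric embeddings $\p_n\colon B_n\to\Delta_P$ satisfying $\p_n|_{B_{n-1}}=\p_{n-1}$, built by induction on $n\geq 1$; the base case is the hypothesis $\p_1$. The key structural tool is the canonical Pauli labelling of faces in $\Delta_P$ from \S\ref{Pauli}: through $\p_n$, every face of $B_n$ inherits a label in $\{X,Y,Z\}$, and at every vertex $y\in B_n$ the edges of the link $L_y$ receive the Cayley-graph labelling of $P$ with respect to $\{X,Y,Z\}$.

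For the inductive step, I first extend the Pauli labelling to $B_{n+1}$. Fix $y\in S_n$ and pick a simplicial geodesic from $x_0$ to $y$ with penultimate vertex $z\in B_{n-1}$. The three faces of $\Delta$ containing the edge $[z,y]$ all lie in $B_n$ (each has its third vertex at distance $\leq n$ from $x_0$) and are therefore labelled; their labels identify three edges of $L_y$ emanating from the vertex $e_z=[z,y]$ with the three generators $X$, $Y$, $Z$. Since $L_y$ is a Moebius--Kantor graph, this partial labelling propagates by Cayley-graph rigidity to a unique labelling of the whole of $L_y$ as the Cayley graph of $P$. Evenness of $\Delta$, via Lemma~\ref{L - rank 2 roots}, then guarantees that this extension is consistent with the labels already carried by the other faces of $B_n$ meeting $y$, because no rank-$2$ root at $y$ may fail to carry all three distinct labels.

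Global consistency across $S_n$ is the following check: for an edge $[y,y']\subset S_n$ bounding a new face $t'\notin B_n$, the label of $t'$ determined at $y$ must coincide with the label determined at $y'$. Both values are forced by the evenness of the faces of $B_n$ adjacent to $[y,y']$ via Lemma~\ref{L - rank 2 roots}, and they coincide, exactly as in the final step of the proof of Proposition~\ref{P - even unique Pauli puzzle}. Once every face of $B_{n+1}$ carries a well-defined Pauli label, the label-preserving isomorphism between $L_y$ and the link of $\p_n(y)$ in $\Delta_P$ is uniquely determined at every $y\in S_n$ by the restriction of $\p_n$, so the local extensions paste into a simplicial local isometry $\p_{n+1}\colon B_{n+1}\to\Delta_P$; this is an isometric embedding since $\Delta$ is CAT(0). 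Uniqueness of $\p_{n+1}$, hence of $\p=\varinjlim \p_n$, is automatic from the forced nature of the labelling.

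The hardest step is the global consistency check across $S_n$: one must verify that the extensions of the Pauli labelling determined locally at distinct vertices of $S_n$ agree on each shared new face. This is where the evenness hypothesis is decisive; the mechanism is the same as in Proposition~\ref{P - even unique Pauli puzzle}, now applied across the spheres of $\Delta$ rather than inside a flat plane, with the $6$-cycle relations of $P$ as the algebraic backbone ensuring that the local label propagations fit together globally.
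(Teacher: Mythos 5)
Your proposal is correct and takes essentially the same route as the paper: induct on balls, use $\p_n$ to Pauli-label $B_n$, extend the link labelling uniquely at each vertex of $S_n$, and then verify that the two labels assigned to each new face across an edge $[x,y]\subset S_n$ agree by combining evenness with Lemma~\ref{L - rank 2 roots}. Your ``global consistency check'' is precisely the paper's Lemma~\ref{L - label well defined} (proved there by choosing two faces $F_x,F_y\subset B_n$ with equal labels and reading off the parity of the large triangle), and your ``Cayley-graph rigidity'' at a single vertex is the paper's unique extension $\p_x\colon L_x\to\Delta_P$ of the partial link embedding.
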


\begin{proof} We prove by induction that there exists a unique isometric embedding $\p_n\colon B_n\to \Delta_P$ such that ${\p_n}_{|B_1}=\p_1$. The theorem follows immediately.

Suppose that $\p_n\colon B_n\to \Delta_P$ is an isometric isomorphism onto its image. Using $\p_n$, we may label every face in $B_n$ with one of the Pauli matrices $X,Y,Z$. Let us construct $\p_{n+1}$. This construction will also establish Theorem \ref{T - unique even}.

For every vertex $x\in S_n$, there exists a unique isometric embedding $\p_x\colon L_x\to \Delta$ whose restriction to $B_n$ coincides with $\p_n$.

For every edge $[x,y]$ in $S_n$, let $h_{x,y}^k$, $k=0,1$, denote the height, perpendicular to $[x,y]$, in the two faces containing $[x,y]$ which are not contained in $B_n$. We write $H_{x,y}^k$ and $H_{y,x}^k$ for the two half faces, separated by $h_{x,y}^k$, respectively containing $x$ and $y$. Thus, $ H_{x,y}^k\cup H_{y,x}^k$, $k=0,1$, are the two faces on $[x,y]$ not contained in $B_n$.

For every $x\in S_n$ we may label, using $\p_x$,  every half face $H_{x,y}^k$, $k=0,1$, with one of the Pauli matrices $X,Y,Z$. 

We claim (Lemma \ref{L - label well defined}): 
\begin{quote} for every $[x,y]$ and $k=0,1$, the labels of the half faces $H_{x,y}^k$ and $H_{y,x}^k$ coincide.
\end{quote} 

This shows that there exists a well-defined isometric embedding $\p_{n+1}\colon B_{n+1}\to \Delta$ extending $\p_n$, which coincides with $\p_x$ on $L_x$ for every $x\in S_n$. It takes $H_{x,y}^k\cup H_{y,x}^k$ to the unique face in $\Delta_P$ which contains $\p_n([x,y])$, and whose label is the common label of $ H_{x,y}^k$ and $H_{y,x}^k$.  The uniqueness of such an embedding is clear. 
\end{proof}

We now prove the claim.

\begin{lemma}\label{L - label well defined} For every $[x,y]$ and $k=0,1$, the labels of the half faces $H_{x,y}^k$ and $H_{y,x}^k$ coincide.
\end{lemma}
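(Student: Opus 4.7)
The plan is to reduce the statement to a binary dichotomy---match vs.\ swap---between the labels assigned by $\p_x$ and $\p_y$, and then to rule out the swap case by appealing to the even parity of the outer face $t_k$ through Lemma~\ref{L - rank 2 roots}.

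First, $\p_x$ identifies $L_x$ with the Moebius--Kantor graph labelled as the Cayley graph of $P$ with generators $\{X,Y,Z\}$. At the vertex of $L_x$ corresponding to $[x,y]$, the three emanating edges---which index the three faces $t, t_0, t_1$ meeting at $[x,y]$---carry the three distinct Pauli labels, and analogously at $y$. Writing $a := \p_n(t)$, $b_k^x := \p_x(t_k)$, and $c_k^y := \p_y(t_k)$, the face $t$ provides the common value $a$ from both sides (since $t \in B_n$ and both $\p_x$ and $\p_y$ extend $\p_n$), so $\{b_0^x, b_1^x\} = \{c_0^y, c_1^y\} = \{X,Y,Z\} \setminus \{a\}$; the labels therefore either match ($b_k^x = c_k^y$, which is the claim) or are swapped.

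To rule out the swap case I would invoke the evenness of $t_k$. Let $z_k$ denote the third vertex of $t_k$ and choose a large triangle $\tilde t_k$ with corner face $t$ across $[x,y]$ and corner faces $s, s'$ across $[x,z_k]$ and $[y,z_k]$. By Lemma~\ref{L - rank 2 roots}, the root at $x$ (resp.\ at $y$) associated with $\tilde t_k$ is of rank $2$ iff its labels $(a, b_k^x, \p_x(s))$ (resp.\ $(a, c_k^y, \p_y(s'))$) exhaust $\{X,Y,Z\}$; at $z_k$, the Pauli labelling of $L_{z_k}$ as a Cayley graph of $P$ constrains the labels of $s, t_k, s'$ via the $6$-cycle relations $XYXZYZ = YZYXZX = ZXZYXY = \Id$. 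Summing the three rank-$2$ indicators mod~$2$ must yield $0$ by evenness of $t_k$, and a short case analysis would then show that this holds in the match case but fails in the swap case, whence the claim.

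The main obstacle is to rigorously determine the label of $t_k$ at $z_k$, since $z_k \in S_{n+1}$ is not yet part of the labelled region (i.e.\ $\p_{z_k}$ is not yet defined). The key observation is that the label of $s$ at $z_k$ coincides with its label at $x$ (the labels in $\Delta_P$ being face-wise well-defined), and similarly for $s'$; using these known labels together with the constraint that the three edges at each vertex of $L_{z_k}$ carry the three distinct Pauli labels, and the further constraint coming from the $6$-cycle relations, the label of $t_k$ at $z_k$ is uniquely pinned down. The swap assumption would make this forced label incompatible with the even-parity condition on $t_k$, giving the contradiction.
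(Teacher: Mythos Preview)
Your reduction to the match/swap dichotomy in the first two paragraphs is correct and clean. The gap is in how you propose to rule out the swap case.

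You try to exploit the evenness of the \emph{outer} face $t_k$ via a large triangle $\tilde t_k$ and Lemma~\ref{L - rank 2 roots} applied at all three vertices $x,y,z_k$. But $z_k\in S_{n+1}$, so no embedding $\p_{z_k}$ is available and hence no Pauli labels exist at $z_k$; Lemma~\ref{L - rank 2 roots} cannot be invoked there. Your workaround---declaring the label of $s$ ``at $z_k$'' to equal its $\p_x$-label, and likewise for $s'$ via $\p_y$---is circular: you are positing a Cayley labelling of $L_{z_k}$ simultaneously compatible with $\p_x$ and $\p_y$, but the existence of such a labelling is exactly what the inductive step is meant to establish. In the swap case your two declarations already disagree on the edge of $L_{z_k}$ corresponding to $t_k$; that only shows your declared labelling is inconsistent, not that the swap case is impossible in $\Delta$. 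If instead you treat the rank at $z_k$ as an intrinsic unknown and use evenness of $t_k$ as a single parity constraint, a short check shows that for every choice of $s,s'$ the constraint is satisfiable in both the match and the swap scenarios, so no contradiction emerges.

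The paper's argument avoids the vertex $z_k$ altogether by working with the \emph{inner} face $F=[x,y,z]\subset B_n$, whose third vertex $z$ already lies in the labelled region, so Lemma~\ref{L - rank 2 roots} applies at all three corners. The decisive additional idea is to choose the corner faces $F_x,F_y$ (across $[z,x]$ and $[z,y]$) to carry the \emph{same} Pauli label; this forces the root at $z$ to have rank~$3/2$, whence the even parity of $F$ forces the roots at $x$ and $y$ to have equal rank---which is precisely $k=l$.
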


\begin{proof}
Consider the face $[x,y,z]$ in $B_n$ containing $[x,y]$, and two faces $F_x$ and $F_y$ in $B_n$, adjacent to $[z,x]$ and $[z,y]$ respectively, and having identical labels, say $X$. Let $k,l\in\{0,1\}$ denote the two indices such that $H_{x,y}^k$ and $H_{y,x}^l$ have label $X$. We must show that $k=l$.

Since the labels of the faces $F_x$, $F_y$, and half faces $H_{x,y}^k$ and $H_{y,x}^l$, coincide, the rank of the roots corresponding three roots at  $x$, $y$ and $z$ is $\frac 3 2$ (Lemma \ref{L - rank 2 roots}).
 
Since $F$ is an even face, the number of roots of rank $\frac 3 2$ in a large triangle containing $F$ is odd. Since the root at $z$ is of rank $\frac 3 2$, the roots at $x$ and $y$ must have the same rank. Therefore, $k=l$. For the corresponding face in $\Delta$, the large triangle adjoins two roots of rank $\frac 3 2$ at $x$ and $y$ for a total of 3 such roots.   
\end{proof}

\section{Classifying the even root distributions}\label{S - classifying even roots}

In this section, we provide a direct approach to the classification of even root distributions in a flat, which rely only on the root distribution rather than the Pauli complex. 

We will in fact start with an arbitrary even Moebius--Kantor complex, not a priori known to be isomorphic to $\Delta_P$, and show how its flats can be classified by studying the root distributions. The same proof applies to classify the abstract even root distributions.

In \ts 2 of \cite{autf2puzzles}, a notion of $w$-block in an  $\Aut(F_2)$-puzzle was introduced. It is a geometric configuration of shapes having the property of being forward analytic in $\Aut(F_2)$ puzzles. We show here that a configuration leading to a similar behaviour can be found in every even simply connected Moebius--Kantor complex.

Let  $\Delta$ be an even simply connected Moebius--Kantor complex.

\begin{lemma}\label{L - t}  Consider a  trapezoid $t$ of height 1 with a base of length 3, such that the roots on the base have rank 2, and the root on the  top, rank 3/2. If $t$ belongs to a flat $\Pi$, then  $t$ admits a unique extension into a sector of $\Pi$ of base $t$ and rank 2 boundary.
\end{lemma}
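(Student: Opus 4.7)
The plan is to construct the sector inductively, extending one row of triangles at a time outward from $t$, using the even parity of $\Delta$ together with the rank 2 boundary requirement to force both the shape of the sector and the root configuration at each newly added vertex. Existence and uniqueness will then be obtained simultaneously.

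First I would identify the starting vertices of the two boundary rays of the sector, namely the two top corners of $t$. At each of them the rank 2 boundary condition selects, among the two rank 2 simplicial directions at the vertex, the unique one pointing away from $t$; this fixes the direction of both boundary rays in $\Pi$ and hence the shape of the sector as the region of $\Pi$ delimited by these two rays and the top of $t$.

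For the inductive step, assume the first $n$ rows above $t$ have already been constructed. I would then build the next row of triangles in $\Pi$ and determine the root distribution at each new vertex as follows. At each new boundary vertex, the rank 2 boundary condition again selects the direction of the boundary ray and so fixes the root distribution. At each new interior vertex, the root distribution is forced by the evenness of the faces sitting just below the vertex: the roots at the other two corners of such a face are already known from the previous row, and the parity condition, that the number of rank 2 roots among the three corners of an even face is even, determines the rank of the root at the new vertex uniquely.

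The main obstacle will be to verify consistency, namely that when a new interior vertex is adjacent to several already-constructed faces, the constraints coming from each of them agree. I would handle this by propagating the configuration outward from the top root of $t$, which by hypothesis is rank $3/2$ and, together with the rank 2 roots on the base, anchors the induction, combined with a short case analysis relying on Lemma \ref{L - rank 2 roots}, available here via Theorem \ref{T - unique even}. The resulting compatibility is essentially the forward-analyticity property that lets $t$ play the role of the base of a $w$-block in the sense of \cite{autf2puzzles}.
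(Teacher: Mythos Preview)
Your plan inverts the logical role of the rank~2 boundary. In the lemma the flat $\Pi$ is given, so the root distribution on the putative sector is already determined by $\Pi$; the task is to \emph{deduce} from evenness and the hypotheses on $t$ what that distribution must be, and to conclude in particular that the two bounding rays carry rank~2 roots. You instead impose ``rank~2 boundary'' as a constraint at each new boundary vertex and use it to drive the construction. This yields at best uniqueness (``if a sector with rank~2 boundary exists, it looks like this''), but not existence: nothing in your outline shows that the actual root distribution of $\Pi$ satisfies the boundary condition you are assuming. The paper's argument avoids this by working purely forward from the data: the rank~$3/2$ top root of $t$ forces the next row, evenness applied to the faces of $t$ then forces the slanted sides to be rank~$3/2$ (using that the inner base vertices are rank~2), and the same pattern propagates; the rank~2 boundary is read off at the end rather than fed in at the start.

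There is a second, more local, gap. You write that at a new interior vertex the evenness of a single face below it ``determines the rank of the root at the new vertex uniquely.'' Evenness of a face constrains only whether the direction of the opposite side is rank~2 at that vertex; since two of the three simplicial directions are rank~2, this one parity bit does not pin down $\delta(x)$. You would need at least two adjacent faces with known data, and even then you must argue why the constraints are compatible and jointly determining, which is exactly the content you defer to a ``short case analysis.'' Finally, invoking Lemma~\ref{L - rank 2 roots} via Theorem~\ref{T - unique even} defeats the purpose of this section, which is explicitly meant to give a direct argument independent of the Pauli structure; once you allow yourself Theorem~\ref{T - unique even}, the whole classification of even root distributions is immediate from \cite{Pauli} and there is nothing left to prove here.
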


\begin{proof} The rank 3/2 root in  $t$ extends in a unique way. Since $\Delta$ is even, the slanted sides of length 2 are of rank 3/2. This follows from the fact that the inner vertices are both of rank 2. Thus, $t$ extends uniquely into a 2-strip $S_2$ of length 3 in $\Pi$. Since $\Delta$ is even, the two upper roots in $\Pi$ are of rank 3/2, and therefore extend uniquely into $\Pi$. Again, the slanted sides of length 2 are of rank 3/2, and $S_2$ extends uniquely into $\Pi$ into a partial cone $S_3$ of height 3. By induction, the upper roots in $S_n$ are always of rank $3/2$, which provides a uniquely defined sector $S:=\bigcup S_i$ of base $t$ in $\Pi$.
\end{proof}

\begin{figure}[H]\includegraphics[width=7cm]{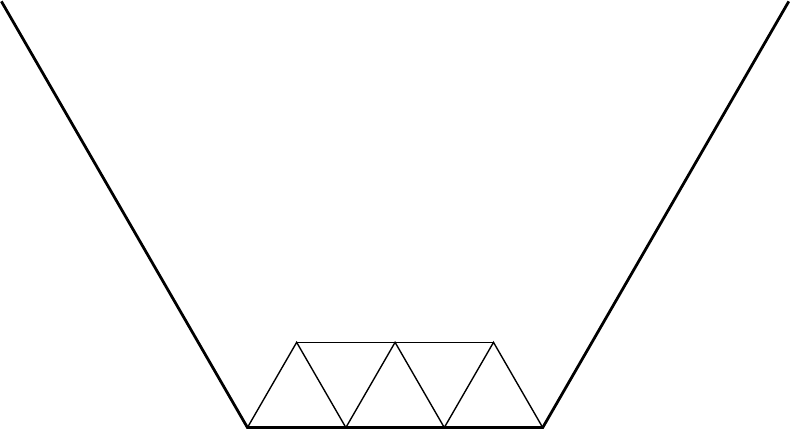}
\caption{The figure shows the sector $S$ and  its rank 2 boundary}
\end{figure}

\begin{lemma}\label{L - S}
The sector $S$ (in the notation of Lemma \ref{L - t}) belongs to a unique flat.
\end{lemma}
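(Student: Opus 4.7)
The plan is to prove uniqueness of the flat containing $S$ by a forward analytic extension argument, using the rank 2 boundary of $S$ together with the evenness of $\Delta$ to force the completion of $S$ into $\IR^2$ one row of faces at a time. Suppose $\Pi$ and $\Pi'$ are two flats of $\Delta$ containing $S$. I aim to show that $\Pi = \Pi'$ by establishing that the faces of $\Pi$ outside $S$ are canonically determined by $S$, and therefore coincide with those of $\Pi'$.

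Starting from one of the slanted rays on the boundary of $S$, I adjoin a single row of faces across the ray. At each boundary vertex $x$, two of the three roots at the newly adjoined face (in some large triangle containing it) are already determined by $S$, while the third root corresponds to the direction transverse to the ray. Since $\Delta$ is even, the parity of rank 2 roots among these three must be even; this parity constraint, combined with the Moebius--Kantor link condition, determines the rank of the transverse root and thereby selects a unique face to adjoin. The same face is added to both $\Pi$ and $\Pi'$, so they agree on this row.

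I then iterate, adjoining further rows outward. For the induction to proceed indefinitely, the new boundary after each step must again exhibit a rank 2 pattern analogous to the original boundary of $S$. This is proved inductively, paralleling the argument of Lemma \ref{L - t}: the evenness of the newly adjoined faces forces the new boundary roots to be of rank 2, while the roots interior to the fresh layer propagate as rank 3/2 and therefore extend uniquely into $\Pi$. The same extension procedure is then applied across the other slanted ray of $S$ and across the base of $t$ (whose roots are rank 2 by hypothesis). Together, these canonical extensions fill the entire exterior of $S$ uniquely, yielding $\Pi = \Pi'$.

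The main obstacle is ensuring coherence at the corners of the exterior region, where the extensions initiated from different boundary components (the two slanted rays and the base) must agree when they meet. This reduces to a careful bookkeeping of ranks in the Moebius--Kantor links adjacent to these corner faces, similar in spirit to Lemma \ref{L - label well defined}: the evenness condition applied to each corner face forces exactly one compatible completion, so that no contradiction arises as the canonical extensions encircle $S$ and close up into a uniquely determined flat.
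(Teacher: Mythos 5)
Your approach is genuinely different from the paper's, and it has a gap at its central step. You propose to grow the flat outward from $S$ one row at a time across its rank 2 boundary, claiming that evenness ``determines the rank of the transverse root and thereby selects a unique face to adjoin.'' But the boundary of $S$ is of rank 2, and a root $\alpha$ of rank 2 in a Moebius--Kantor link admits exactly $N(\alpha)=q_\alpha=2$ completions into a geodesic circle; this is precisely the locally ambiguous direction, in contrast with the rank $3/2$ roots that drive the unique upward extension in Lemma \ref{L - t}. The parity condition at a face of $S$ adjacent to the boundary determines at best the \emph{rank} of one new root in one large triangle; it does not single out \emph{which} of the candidate faces of $\Delta$ the flat actually uses, nor does it show that the choices made at consecutive boundary vertices, and at the corners where the boundary components meet, cohere. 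That consistency problem is the entire content of the lemma, and your proposal defers it to ``careful bookkeeping'' without carrying it out.

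The paper avoids face-by-face propagation across the rank 2 boundary altogether. Given a flat $\Pi\supseteq S$, it reflects the trapezoid $t$ across its rank 2 base to obtain $\overline t\subset\Pi$, and uses evenness (an argument on the 2-triangle over the far side of $\overline t$) to show that the far side of $\overline t$ must be of rank $3/2$; hence $\overline t$ is again a trapezoid of type $t$ and Lemma \ref{L - t} produces an opposite sector $\overline S\subset\Pi$. Since flats are convex in the CAT(0) complex $\Delta$ and the convex hull of two opposite sectors is an entire plane, at most one flat contains $S\cup\overline S$. This single convexity argument replaces your infinite induction and corner analysis. If you wish to keep the outward-propagation strategy, you must prove, not assert, that at each rank 2 boundary vertex exactly one of the two completions of the boundary root is compatible with the evenness of the adjacent faces, and that these local choices glue along the entire boundary of $S$.
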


\begin{proof}
Let $S\subset \Pi$ be a copy of $S$ in a flat, and let $\overline t$ denote the reflection of $t$ with respect to its rank 2 boundary.  If the lower boundary of $\overline t$ is of rank 3/2, we have an opposite copy of $t$ in $\Pi$, and Lemma \ref{L - t} shows that $\overline t$ extends to a copy $\overline S$ of $S$ in $\Pi$. By convexity, there exists at most one flat of $\Delta$ containing $S\cup \overline S$. It is easy to check that such a flat indeed exists. (It is represented below.)

Let us show that the lower boundary of $\overline t$ must be of rank 3/2. Otherwise, it is of rank 2. Since the central triangle in $t$ is even, the 2-triangle $T$ (which is formed of 4 equilateral triangles) of base the lower base of $\overline t$ admits two sides of rank 3/2 or two sides of rank 2. Since the center triangle in $\overline t$ is even, this proves that  the lower side of $T$ is of rank 3/2. 
\end{proof}

\begin{figure}[H]\includegraphics[width=7cm]{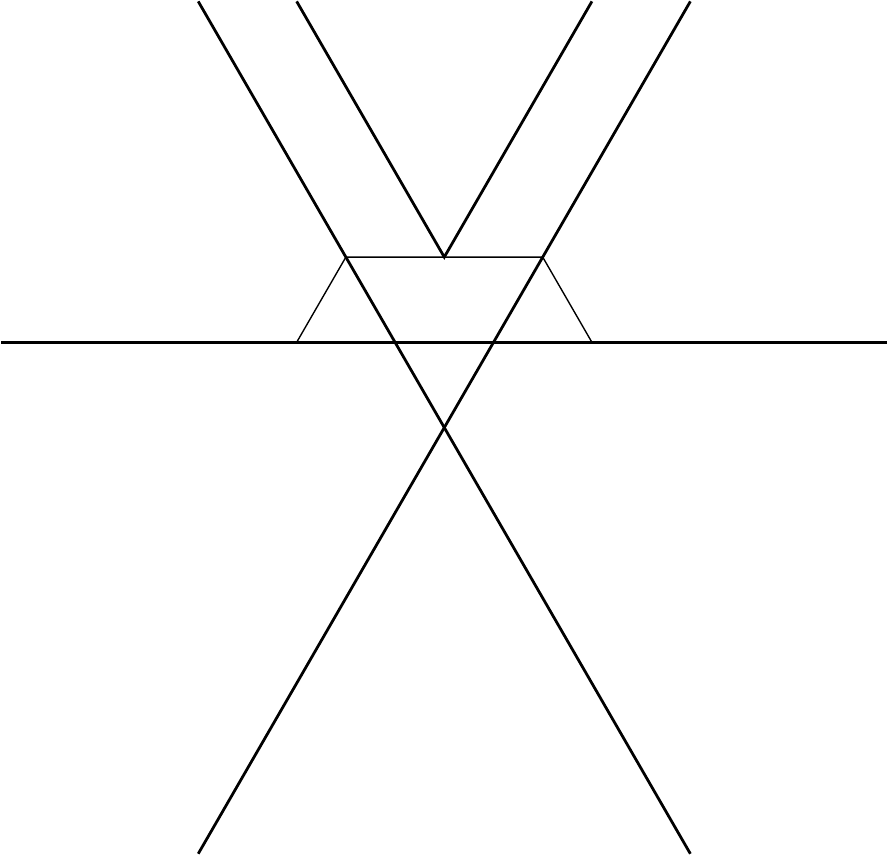}
\caption{The $t$-flat $\Pi_t$; three intersecting rank 2 (in dark) geodesics are drawn, and one of the trapezoid $t$ is shown; the symmetries $S_3$ of the center triangle extend to the $t$-flat.}
\end{figure}

\begin{remark}
The flat $\Pi_t$ corresponds to one of the Pauli $M$-puzzles in \cite{Pauli}, where $M\in \{X,Y,Z\}$.
\end{remark}

By strip of rank 2, we mean a simplicial flat strip whose boundary roots are of rank 2.  

\begin{lemma}
If $\Delta$ is even, the minimal strips of rank 2 are of height 1.
\end{lemma}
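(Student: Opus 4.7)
The plan is to show that every rank-$2$ strip of height $h\geq 2$ properly contains a rank-$2$ line in its interior, so that no such strip is minimal; since rank-$2$ strips have positive integer height, this forces minimal rank-$2$ strips to have height exactly $1$. Let $\Pi$ be a flat containing parallel rank-$2$ lines $\ell_0$ and $\ell_h$ with $h\geq 2$, and let $\tau$ denote the root distribution on $\Pi$, selecting at each vertex the unique simplicial direction not of rank~$2$. At every vertex of $\ell_0$ the value $\tau$ must lie among the two slanted directions, which I will call $+$ and $-$.

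The technical core will be a parity computation for the upward face $f$ with base $[a,b]\subset\ell_0$ and apex $c\in\ell_1$: inside $\Pi$, the large triangle $\tilde f$ places $\alpha_a$ and $\alpha_b$ along the two opposite slants (so $\alpha_a$ is of rank~$2$ iff $\tau(a)=+$, and $\alpha_b$ is of rank~$2$ iff $\tau(b)=-$) and places $\alpha_c$ along $\ell_1$ itself (so $\alpha_c$ is of rank~$2$ iff $\tau(c)$ is slanted). The evenness of $f$ then forces $\alpha_c$ to be of rank~$2$ exactly when $\tau(a)=\tau(b)$.

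I would then split into two cases. If $\tau$ is constant along $\ell_0$, applying the above observation at every edge of $\ell_0$ gives $\tau(c)$ slanted at every $c\in\ell_1$, whence $\ell_1$ is of rank~$2$ and the pair $(\ell_0,\ell_1)$ bounds a height-$1$ rank-$2$ strip strictly inside the original. If instead $\tau$ has a sign change $\tau(a)\ne\tau(b)$ on some edge $[a,b]\subset\ell_0$, then the vertex $c\in\ell_1$ above $[a,b]$ has $\tau(c)$ horizontal, so the root along $\ell_1$ at $c$ is of rank~$3/2$. The trapezoid $t$ of height $1$ and base length $3$ on $\ell_0$ centered at $[a,b]$ then has rank-$2$ base roots and a rank-$3/2$ top root, so satisfies the hypotheses of Lemma~\ref{L - t} and extends uniquely into a sector $S\subset\Pi$ with rank-$2$ boundary. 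Lemma~\ref{L - S} then identifies $\Pi$ with the $t$-flat $\Pi_t$.

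The final step, and the one I expect to be the main obstacle, is to derive a contradiction from $\Pi=\Pi_t$ in this second case: the rank-$2$ geodesics of $\Pi_t$ are exactly the three concurrent geodesics depicted in the figure of $\Pi_t$, one in each of the three simplicial directions, and in particular no two of them are parallel---contradicting $\ell_0\parallel\ell_h$. Establishing that these are \emph{all} the rank-$2$ geodesics of $\Pi_t$ is the delicate point: one has to either read this off from the construction of $\Pi_t$ in Lemma~\ref{L - S} together with its $S_3$-symmetry, or supply a short direct verification ruling out any additional rank-$2$ geodesic parallel to one of the three.
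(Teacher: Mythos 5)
Your argument is correct and follows the paper's proof in its essential mechanism: produce the trapezoid $t$ with rank-$2$ base on $\ell_0$ and a rank-$3/2$ top root on $\ell_1$, invoke Lemmas \ref{L - t} and \ref{L - S} to force the ambient flat to be $\Pi_t$, and conclude from the fact that $\Pi_t$ contains no two parallel rank-$2$ geodesics (a fact the paper likewise asserts by inspection of $\Pi_t$ rather than proving in detail, so the step you flag as delicate is no worse than the original). The only real divergence is that your Case~1 parity computation is a detour: minimality already implies directly that $\ell_1$ cannot consist entirely of rank-$2$ roots, which is all that is needed to produce the rank-$3/2$ root and hence the trapezoid, though your computation does have the side benefit of identifying the rank-$3/2$ roots on $\ell_1$ with the sign changes of the root distribution along $\ell_0$.
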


\begin{proof}
Suppose that there exists a minimal strip $\Sigma$ with rank 2 boundaries. If it is not of height 1, then there exists a root of rank 3/2 at distance 1 from the boundary. Since the boundary is of rank 2, $\Sigma$ contains a trapezoid $t$.  By Lemma \ref{L - t} and Lemma \ref{L - S}, $\Sigma$ must be included in the $t$-flat. However, the latter  does not contain such a strip. 
\end{proof}

By geodesic of rank 2 in a flat, we mean a simplicial straight line containing exclusively roots of rank 2.

\begin{lemma}
If a flat $\Pi$ contains a geodesic $l$ of rank 2, then either $\Pi$ coincides with the $t$-flat $\Pi_t$, or it is a union of minimal strips of rank 2 parallel to $l$. 
\end{lemma}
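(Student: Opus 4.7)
The approach is to analyze the root distribution $\delta$ on the vertices of $l$ and use the evenness of the faces adjacent to $l$ to control the rank of roots on the parallel simplicial line just above. Label the vertices of $l$ as $\{v_i\}_{i\in\IZ}$ and the vertices of the line $m$ one step above $l$ as $\{w_i\}_{i\in\IZ}$, with $w_i$ above the midpoint of $[v_i,v_{i+1}]$. At each $v_i$ the direction of $l$ has rank 2, so $\delta(v_i)$ must lie in one of the two remaining simplicial directions $d_\pm(v_i)$ at angles $\pm\pi/3$ from $l$; let $\sigma_i\in\{+,-\}$ record this choice.

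The key preliminary computation is the parity condition for each up-triangle $U_i=[v_i,v_{i+1},w_i]$. Since the root of a face at a given vertex lies in the simplicial direction parallel to the opposite edge, the three roots of $U_i$ are the direction $d_-$ at $v_i$, the direction $d_+$ at $v_{i+1}$, and the horizontal direction at $w_i$. Expressing the evenness of $U_i$ in terms of $\sigma_i$, $\sigma_{i+1}$, and whether $\delta(w_i)$ is horizontal, a routine case check yields the equivalence
\[
\sigma_i=\sigma_{i+1}\ \iff\ \delta(w_i)\text{ is not the horizontal direction at }w_i,
\]
i.e., $\sigma_i=\sigma_{i+1}$ if and only if $m$ is of rank 2 at $w_i$.

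From here the argument splits into two cases. If $\sigma$ is constant on $l$, the equivalence forces $\delta(w_i)$ to avoid the horizontal direction for every $i$, so $m$ is itself a rank 2 geodesic. An analogous parity computation for the down-triangles $D_i=[v_i,w_{i-1},w_i]$ shows that the analog of $\sigma$ on $m$ is again constant, and the argument iterates: the next line parallel to $l$ is also a rank 2 geodesic with constant sign pattern, and so on. Running this both above and below $l$, every simplicial line of $\Pi$ parallel to $l$ is a rank 2 geodesic. Adjacent such lines bound height-1 strips of rank 2, which are minimal by the preceding lemma, and $\Pi$ is their union, yielding the second alternative.

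If instead $\sigma$ is not constant on $l$, choose $i_0$ with $\sigma_{i_0}\neq\sigma_{i_0+1}$; the equivalence then gives that $\delta(w_{i_0})$ is the horizontal direction at $w_{i_0}$, so the along-$l$ root at $w_{i_0}$ is of rank $3/2$. Form the trapezoid $t$ of height 1 with base $[v_{i_0-1},v_{i_0+2}]\subset l$ (of length 3) and top $[w_{i_0-1},w_{i_0+1}]$. Its base roots at the internal vertices $v_{i_0},v_{i_0+1}$ are of rank 2 because $l$ is a rank 2 geodesic, and its top root at $w_{i_0}$ is of rank $3/2$, so the hypotheses of Lemma~\ref{L - t} are met. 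That lemma provides a sector $S\subset\Pi$ with rank 2 boundary extending $t$, and Lemma~\ref{L - S} identifies $S$ as lying in a unique flat, the $t$-flat $\Pi_t$. Since $S\subset\Pi$, we conclude $\Pi=\Pi_t$, giving the first alternative. The main obstacle is the parity bookkeeping in the up- and down-triangle computations and the correct identification of each face's three roots with simplicial directions at its vertices; once the key equivalence above is in place, both alternatives of the lemma follow directly from the earlier lemmas on trapezoids and sectors.
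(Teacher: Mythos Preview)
Your proof is correct and follows the same overall strategy as the paper: locate a trapezoid $t$ sitting on a rank~2 line parallel to $l$ and invoke Lemmas~\ref{L - t} and~\ref{L - S} to conclude $\Pi=\Pi_t$, or else show every parallel line is a rank~2 geodesic. Your explicit parity computation (the equivalence $\sigma_i=\sigma_{i+1}\iff m$ is rank~2 at $w_i$, together with the down-triangle analogue forcing the sign pattern on $m$ to be constant) supplies the details behind the paper's one-line assertion that failure of the strip decomposition produces such a trapezoid, so your argument is a more fleshed-out version of the same proof rather than a genuinely different route.
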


\begin{proof}
Let $l$ be a rank 2 geodesic. If $\Pi$ is not a union of minimal strips of rank 2 parallel to $l$, then there exists a parallel geodesic $l'\parallel l$ of rank 2, and  a parallel geodesic at distance 1 from $l'$ which contains a root of rank 3/2. Thus,  $\Pi$ contains the trapezoid $t$ which implies $\Pi=\Pi_t$.  
\end{proof}

 We shall now consider another `glider' (i.e., a configuration which is forward analytic in flats) which behaves similarly to the $w$-block in $\Aut(F_2)$ puzzles. Furthermore, if $\Pi\neq \Pi_t$ then it will also be backward analytic.

\begin{lemma}\label{L - t'} Let $\Delta$ be an even simply connected Moebius--Kantor complex. Consider a  trapezoid $t'$ of height 1 with a base of length 3, such that precisely one of the two roots on the base have rank 2, and such that the top root has rank 3/2. Every flat containing $t'$ contains a geodesic of rank 2.  
\end{lemma}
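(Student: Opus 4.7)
The plan is to exploit the ``glider'' structure of $t'$ along the lines suggested by the remark preceding the lemma. Fix coordinates so that $t'$ has horizontal base along $y=0$ with interior vertices $(1,0)$ and $(2,0)$ and top along $y=\sqrt{3}/2$ with interior vertex $(3/2,\sqrt{3}/2)$; without loss of generality the rank $2$ base root sits at $(1,0)$ while the rank $3/2$ base root sits at $(2,0)$, and the top root at $(3/2,\sqrt{3}/2)$ is rank $3/2$. I first apply the evenness condition---that the number of rank $2$ roots in any large triangle containing a face is even---to the central up-triangle of $t'$ and to one of its adjacent down-triangles. This forces the rank $3/2$ direction at $(1,0)$ and at $(1/2,\sqrt{3}/2)$ to be slanted $60^\circ$, so that the slanted $120^\circ$ direction is rank $2$ at both vertices. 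Let $L$ be the slanted $120^\circ$ line through $(1,0)$ and $(1/2,\sqrt{3}/2)$; this is the candidate rank $2$ geodesic.

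Next I would propagate the rank $2$ property upward along $L$ by induction. A short parity computation, analogous to the one above but applied to the next pair of up- and down-triangles of $\Pi$ sitting above $t'$, shows that the rank $3/2$ direction at the next vertex of $L$ is again slanted $60^\circ$, so that $L$ is rank $2$ at every vertex of the upward ray from $(1,0)$. This realizes the ``forward analyticity'' of the glider.

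The main difficulty is the downward direction. A parity computation in the triangle with vertices $(1,0)$, $(2,0)$, $(3/2,-\sqrt{3}/2)$ forces the horizontal direction to be rank $2$ at $(3/2,-\sqrt{3}/2)$, but does not a priori determine whether the rank $3/2$ direction there is slanted $60^\circ$ or slanted $120^\circ$. Following the remark that backward analyticity holds precisely when $\Pi\neq\Pi_t$, I split into two cases. In the first case, the rank $3/2$ direction at $(3/2,-\sqrt{3}/2)$ is slanted $60^\circ$; then the backward propagation proceeds exactly as the forward one and $L$ extends to a bi-infinite rank $2$ geodesic in $\Pi$.

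In the second case, the rank $3/2$ direction at $(3/2,-\sqrt{3}/2)$ is slanted $120^\circ$. I would then verify that the trapezoid of height $1$ with base of length $3$ lying along the slanted $60^\circ$ line through $(2,0)$---with base endpoints $(5/2,\sqrt{3}/2)$ and $(1,-\sqrt{3})$ and top interior vertex $(1,0)$---has both base interior roots of rank $2$ (at $(2,0)$ and $(3/2,-\sqrt{3}/2)$ in the slanted $60^\circ$ direction) and its top interior root of rank $3/2$ (at $(1,0)$, again in the slanted $60^\circ$ direction), and thus realizes the configuration $t$ of Lemma~\ref{L - t}. Since $t\subset\Pi$, Lemmas~\ref{L - t} and~\ref{L - S} then imply $\Pi=\Pi_t$, which already contains the three rank $2$ geodesics through its center triangle. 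Either way, $\Pi$ contains a geodesic of rank $2$, as required.
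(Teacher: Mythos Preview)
Your forward (upward) propagation is fine and matches the paper: the parity of the central up-triangle and the adjacent down-triangle determines the rank $3/2$ direction at $(1,0)$ and $(1/2,\sqrt 3/2)$ to be $60^\circ$, and the two-triangle induction then runs up the $120^\circ$ line $L$ (equivalently, it produces the stack of copies of $t'$ shifted in the $120^\circ$ direction, which is exactly the paper's half-strip $S^{\uparrow}$). Your Case~2 is also correct: the rotated trapezoid you describe is a genuine copy of $t$, and Lemmas~\ref{L - t} and~\ref{L - S} apply.

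The gap is in Case~1. The assertion that ``the backward propagation proceeds exactly as the forward one'' is not justified. The forward step uses \emph{two} tracks, $L$ and the parallel $120^\circ$ line $L'$ through $(2,0)$; knowing the rank $3/2$ direction at $\ell_n\in L$ and at $\ell'_n\in L'$ is what lets you pass to level $n+1$. Going downward, the down-triangle $[(1,0),(2,0),(3/2,-\sqrt3/2)]$ only tells you the \emph{horizontal} direction at $\ell_{-1}=(3/2,-\sqrt3/2)$ is rank $2$; your Case~1 hypothesis pins down $\ell_{-1}$ completely, but the up-triangle $[\ell_{-1},\ell'_{-1},\ell'_0]$ then gives only that the $60^\circ$ direction at $\ell'_{-1}=(5/2,-\sqrt3/2)$ is rank $2$, leaving the rank $3/2$ direction there undetermined between $0^\circ$ and $120^\circ$. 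Without $\ell'_{-1}$ you cannot compute $\ell_{-2}$, so your induction stalls after one step. In fact your Case~1 splits further: if the rank $3/2$ direction at $\ell'_{-1}$ is $120^\circ$, the horizontal trapezoid with base $\{\ell_{-1},\ell'_{-1}\}$ and top $\ell'_0$ is a copy of $t$ and $\Pi=\Pi_t$; only if it is $0^\circ$ do you get a translated copy of $t'$ and may continue. This is exactly the paper's dichotomy, but applied at \emph{every} downward step, not just once: the downward extension is an induction in which each step is either blocked by a copy of $t$ (forcing $\Pi=\Pi_t$) or yields a further copy of $t'$. Replace your single case split with this inductive one (or, equivalently, case-split on the horizontal rank at $\ell'_{-1}$ rather than on the slanted direction at $\ell_{-1}$) and the argument goes through.
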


\begin{proof}
Suppose that $t'$ belongs to a flat $\Pi$. We first show that  $t'$ admits a unique upward extension into a half-strip $S^{\up}$ transverse to $t'$ of height 3 with rank 2 boundaries. By symmetry, we may assume that the bottom rank 2 root is on the right.  Consider the unique upward extension of $t'$ (whose boundary is of rank 3/2). Since the left top face of the trapezoid is even, the righthand side of the extension is of rank 3/2, and therefore admits a unique extension into $\Pi$. This implies that the top root of the resulting extension is of rank 3/2. Therefore, a copy of $t'$ sits on top of it; by induction, we find a half strip $S^{\up}$ piling up infinitely many copies of $t'$.

Next we prove that if $\Pi\neq \Pi_t$ then $S^{\up}$ extends uniquely into a strip $S$ of height 3 with rank 2 boundary. Consider a trapezoid $t''$ extending $S^{\up}$ downwards. Since the faces of $t'$ are even, the right bottom root of $t''$ is of rank 2. If the left bottom root of $t''$ is even, then $t''$ is isomorphic to $t$, so $\Pi=\Pi_t$ by Lemmas \ref{L - t} and \ref{L - S}.  Therefore $t''$ is isomorphic to $t'$; by induction,  we find a strip $S$ containing $S^{\up}$ and piling up infinitely many copies of $t'$.  

To prove the lemma, we may assume that $\Pi\neq \Pi_t$;  note then that the strip $S\subset \Pi$ is a union of strips with rank 2 boundaries.    
\end{proof}

\begin{theorem}
A flat plane in an even simply connected Moebius--Kantor complex is either a flat with the root distribution of $\Pi_t$, or a union of strips of height 1 and rank 2.    
\end{theorem}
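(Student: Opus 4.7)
The plan is to combine the preceding lemmas, splitting on whether the flat $\Pi$ contains a rank 2 geodesic. If $\Pi$ contains a rank 2 geodesic, then the previous lemma forces $\Pi=\Pi_t$ or $\Pi$ to be a union of minimal rank 2 strips; combined with the lemma asserting such strips have height $1$, this yields the theorem's conclusion.

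It thus suffices to show that every flat $\Pi$ in $\Delta$ contains a rank 2 geodesic. I argue by contradiction: assume $\Pi$ contains none. Then $\delta$ cannot be constant, for otherwise every simplicial line in one of the two remaining directions would be rank 2 at every vertex, hence a rank 2 geodesic. Hence there exists a \emph{transition edge} $[v_0,v_1]$ in $\Pi$, lying on a simplicial line $\ell$ in some direction $d$, with $\delta(v_0)\neq d$ and $\delta(v_1)=d$. My goal is to produce a $t'$-trapezoid containing $[v_0,v_1]$, after which Lemma~\ref{L - t'} delivers a rank 2 geodesic in $\Pi$, yielding the desired contradiction.

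To locate $t'$, I consider the two trapezoids in $\Pi$ of height $1$ and base length $3$ having $[v_0,v_1]$ as the middle base edge---one on each side of $\ell$, with respective apices $z$ and $z'$ (the interior top vertices). Both trapezoids have interior base roots of rank $2$ at $v_0$ and rank $3/2$ at $v_1$ by the choice of transition edge, and each is of type $t'$ precisely when its apex root is of rank $3/2$, i.e., iff $\delta(z)=d$ (resp.\ $\delta(z')=d$). The key step is to translate the even-face condition into an explicit constraint on $\delta$: a face $f$ of $\Pi$ is even iff the number of vertices $v\in f$ at which $\delta(v)$ equals the direction of the edge of $f$ opposite to $v$ is odd. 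Applying this to the two triangles $T$ (above $\ell$) and $T'$ (below $\ell$) sharing $[v_0,v_1]$, a short case analysis on $\delta(v_0)\in\{d',d''\}$ (the two simplicial directions distinct from $d$) shows that exactly one of $\delta(z)=d$ or $\delta(z')=d$ must hold; hence $\Pi$ contains a $t'$, and the proof concludes.

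The main obstacle is this last case analysis, which relates the parity of the two triangles $T,T'$ bracketing the transition edge to the value of $\delta$ at the trapezoid's apex. Once established, the remainder of the theorem follows at once from the earlier lemmas.
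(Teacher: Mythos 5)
Your argument is correct, and it organizes the proof differently from the paper. The paper's trichotomy is on whether $\Pi$ contains the glider $t$, the glider $t'$, or neither; the residual case is settled by a propagation argument (the absence of both $t$ and $t'$ forces every trapezoid with a rank~$3/2$ top root to have rank~$3/2$ bottom roots, so by connectedness the rank~$3/2$ roots are aligned and $\Pi$ is a union of height~$1$ rank~$2$ strips). You instead funnel everything through the rank~$2$ geodesic lemma by showing that \emph{every} flat contains a rank~$2$ geodesic: either $\delta$ is constant, or there is a transition edge, and then your parity identity produces a $t'$. That identity --- a face $f$ is even iff the number of vertices $v$ of $f$ with $\delta(v)$ parallel to the edge opposite $v$ is odd --- is correct, because the root at $v$ cut out by a large triangle lying in $\Pi$ runs in the simplicial direction parallel to the opposite edge. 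The two-case check also goes through: with $d$ the direction of the transition edge $[v_0,v_1]$, $\delta(v_1)=d$, $\delta(v_0)\neq d$, and $w,w'$ the two apices, evenness of the two triangles on $[v_0,v_1]$ gives that $\delta(w)=d$ exactly when $\delta(v_0)$ is the direction of $[v_0,w]$, and $\delta(w')=d$ exactly when $\delta(v_0)$ is the direction of $[v_0,w']$; since $\delta(v_0)$ must be one of these two directions, exactly one of the flanking trapezoids is a $t'$, and Lemma~\ref{L - t'} supplies the rank~$2$ geodesic. Your route buys a cleaner logical structure (a single appeal to the geodesic lemma) and an explicit local formula linking $\delta$ to parity, which is useful in its own right; the paper's route avoids the transition-edge computation at the cost of the terser alignment-by-connectedness step. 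Do write out the two-case computation in full, since it is the only place where the evenness hypothesis enters your argument.
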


\begin{proof}
Let $\Delta$ be an even Moebius--Kantor complex and $\Pi$ be a flat in $\Delta$. The previous results show that the conclusions hold if $\Pi$ contains $t$ (see Lemma \ref{L - t}) or $t'$ (see Lemma \ref{L - t'}). Otherwise, $\Pi$ contains neither $t$ nor $t'$. This implies the following statement: 

\begin{enumerate}
\item[] If $\tau$ is a trapezoid of base of length 3 and top root of rank 3/2, then the bottom roots are of rank 3/2.
\end{enumerate} 

By connectedness, this statement in turn implies that the roots of rank 3/2 are aligned in $\Pi$. There exists a unique configuration with this property. Since the roots of rank 3/2 are aligned in $\Pi$ and $\Delta$ is a Moebius--Kantor complex, the two transverse directions must be rank 2. Thus, $\Pi$ is a union of  strips of height 1 and rank 2.
\end{proof}

\end{document}